\definecolor{bblue}{rgb}{.2,0.2,.8}
\theoremstyle{plain}
\newtheorem{theorem}{Theorem}[section]
\newtheorem{proposition}[theorem]{Proposition}
\newtheorem{lemma}[theorem]{Lemma}
\newtheorem{corollary}[theorem]{Corollary}
\theoremstyle{definition}
\newtheorem{definition}[theorem]{Definition}
\newtheorem{assumption}[theorem]{Assumption}
\theoremstyle{remark}
\newtheorem{remark}[theorem]{Remark}
\numberwithin{equation}{section}
\numberwithin{theorem}{section}
\newcommand{\upbar}[1]{\,\overline{\! #1}}
\newcommand{\id}{{1 \mskip -5mu {\rm I}}}
\renewcommand{\epsilon}{\varepsilon}
\renewcommand{\hat}{\widehat}
\begin{document}

\title{Freidlin-Wentzell solutions of discrete Hamilton-Jacobi equations} 

\author{Michele Aleandri}
\address{\noindent Michele Aleandri \hfill\break\indent 
	DEF, LUISS University
	\hfill\break\indent 
	00197 viale Romania, Roma,  Italy
}
\email{maleandri@luiss.it}

\author{Davide Gabrielli}
\address{\noindent Davide Gabrielli \hfill\break\indent 
	DISIM, Universit\`a dell'Aquila
	\hfill\break\indent 
	67100 Coppito, L'Aquila, Italy
}
\email{davide.gabrielli@univaq.it}

\author{Giulia Pallotta}
\address{\noindent Giulia Pallotta \hfill\break\indent 
	DISIM, Universit\`a dell'Aquila
	\hfill\break\indent 
	67100 Coppito, L'Aquila, Italy
}
\email{giulia.pallotta@graduate.univaq.it}

\begin{abstract}
	We consider a sequence of finite irreducible Markov chains with exponentially small transition rates: the transition graph is a fixed, finite, strongly connected directed graph; the transition rates decay exponentially on a paramenter $N$ with a given rate that varies from edge to edge. The stationary equation uniquely identifies the invariant measure for each $N$, but at exponential scale in the limit as  $N$ goes to infinity reduces to a discrete equation for the large deviation rate functional of the invariant measure, that in general has not an unique solution. In analogy with the continuous case of diffusions, we call such equation a discrete Hamilton-Jacobi equation. Likewise in the continuous case we introduce a notion of viscosity supersolutions and viscosity subsolutions and give a detailed geometric characterization of the solutions in terms of special faces of the polyhedron of one-Lipschitz functions on the transition graph. This parallels the weak KAM theory in a purely discrete setting. We identify also a special vanishing viscosity solution obtained in the limit from the combinatorial representation of the invariant measure given by the matrix tree theorem. The result gives a selection principle on the set of solutions to the discrete Hamilton-Jacobi equation obtained by the Freidlin and Wentzell minimal arborescences construction; this enlightens and parallels what happens in the continuous setting.
\end{abstract}	

\noindent
\keywords{Directed graphs; Hamilton Jacobi equations; Large deviations; Markov Chains}

\subjclass[2010]
{
	60J27  	
	60F10; 
	05C20  
	49L25  	
}

\maketitle
\thispagestyle{empty}

\section{Introduction}

The Freidlin and Wentzell theory (FW theory) \cite{FW} is a milestone in the theory of stochastic processes and develops a general framework for studying the small noise limit of diffusion processes. The language and techniques used are typical of Large Deviations theory (LD theory) and are strongly connected to other areas of mathematics, such as discrete mathematics, calculus of variations, and the theory of Hamilton-Jacobi (HJ) equations.

\smallskip
The basic mathematical object of FW theory is a sample-path LD principle for the weak noise limit of a diffusion process. This is a cost functional defined on absolutely continuous trajectories, which plays the role of a classical action obtained by integrating a Lagrangian. Starting from the action, FW theory introduces the so-called {\it Quasipotential}, which is obtained by a suitable minimization of the dynamic action. In cases where the stochastically perturbed vector field has multiple stable minima, the construction of the quasipotential also involves a discrete optimization procedure. The invariant measure of the system satisfies an LD principle with a rate functional that coincides with the quasipotential. By classical variational arguments, the quasipotential solves a stationary HJ equation. The stationary HJ equations arising in the FW theory do not have a unique {\it viscosity solution}, but the uniqueness of the LD rate functional of the invariant measure selects a special solution. This solution can be obtained directly by considering the small noise limit of the unique invariant measure, in the same spirit as the vanishing viscosity limit for HJ equations. These special solutions of the HJ equations, arising in the weak noise limit of diffusions, have been considered, for example, in \cite{FG,GL,GT,J}.

\smallskip

The FW theory has a basic structure that has been extended to generalized frameworks, both infinite-dimensional and finite-dimensional. In the discrete case of continuous-time Markov chains, the analogous framework is that of Markov chains with exponentially small transition rates, which are widely used and considered in statistical mechanics and metastability (see, for example, \cite{OS1, OS2, S1, S2, S3}). For Markov chains of this type, we have a sample-path LD principle and an LD principle for the invariant measure. Within the framework of LD theory, we introduce a discrete HJ equation that must be solved by the LD rate functional of the invariant measure of the chain. We introduce the notion of sub- and super-solutions, which allows us to identify solutions—i.e., functions that are both super- and sub-solutions—as points of the faces of the polytope of one-Lipschitz functions, with the appropriate dimensionality. A special solution is identified by the limit of the unique invariant measure, in the same spirit as the vanishing viscosity solutions. This special solution is constructed through a discrete optimization problem on rooted directed arborescences, which is the same optimization problem used to define the quasipotential in the continuous FW case with several stable equilibria. We note that our HJ equation is defined on any weighted directed graph without any other geometric structure. In particular, there is a natural action defined on paths obtained through an LD principle, but there is no corresponding natural Hamiltonian. It would be interesting to extend our constructions to metric graphs, where continuous arborescences play a role in the identification of the invariant measure, see \cite{ACG}, and a theory for HJ equations has been developed (see for example \cite{CM} and references therein). We also remark that the discrete Lax-Oleinik semigroup, which we introduce using our discrete action, is related to algorithms for shortest paths on directed graphs, such as Dijkstra's or the Bellman-Ford-Moore algorithms, see \cite{JJ}.

In the case of diffusion processes there is a rich and not trivial relation among large deviations and HJ equations, as illustrated in the references \cite{FG,GL,GT,J}, where non uniqueness, non differentiability, phase transitions and selection principles are intertwined. The main aim of this paper is to illustrate and study all these issues in the simplified discrete setting. In the discrete setting we can, for example, obtain a complete geometric characterization of viscosity solutions, which was instead difficult to achieve in \cite{FG}, with the aim of providing a geometric characterization of the selection principle for the vanishing viscosity solution. Moreover, the discrete version of the theory is interesting in its own right, with strong connections to other mathematical problems on graphs and with challenging extensions, such as to the case of metric graphs and related stochastic processes defined on them.

\smallskip

In summary, we introduce a stationary discrete HJ equation on a finite directed weighted graph, which naturally arises from variational problems associated with the LD rate functional of Markov chains with exponentially small transition rates. We provide a detailed analysis of the structure of the viscosity solutions and identify a special solution obtained as the limit of the unique invariant measure. The discrete theory parallels the weak KAM theory and FW theory in the continuous setting. The discrete framework allows for a clearer view of formulas, constructions, and characterizations.

\smallskip
The paper is organized as follows:

Section 2 introduces our framework of continuous-time Markov chains with exponentially small transition rates, describes the basic assumptions, and derives a discrete HJ equation on abstract directed weighted graphs as a natural equation within LD theory.

Section 3 identifies a special solution of the discrete HJ equation using a combinatorial representation of the invariant measure in terms of directed arborescences. We call this solution the {\it Freidlin–Wentzell solution}.

Section 4 introduces the notion of viscosity solutions, describes special features in terms of the graph’s geometry, and demonstrates that the combinatorial solution is indeed a viscosity solution.

Section 5 explores the geometry of the set of viscosity solutions, identifying it with specific faces of the polyhedron of one-Lipschitz functions with particular dimensionality. We also introduce the notion of {\it Quasipotentials} and provide a general representation formula for viscosity solutions.

Section 6 briefly outlines a parallel with the weak KAM theory obtained from a sample path LD principle for the Markov chains.

Section 7 presents examples and identifies the FW solution as the vanishing viscosity solution.

\section{The discrete Hamilton Jacobi equation }\label{sec:frame}

Let $\mathcal{G} = (V, E)$ be a finite, strongly connected directed graph, meaning that any two vertices can be connected by directed paths. Assume there are no loops, i.e., no edges of the form $(x, x)$. Consider a sequence of irreducible Markov chains with transition probabilities depending on a parameter \(N\). Denote by \((r_N(x, y))_{(x, y) \in E}\) the rates of jumps from \(x \in V\) to \(y \in V\) with \((x, y) \in E\), and define \(r_N(x) := \sum_{y : (x, y) \in E} r_N(x, y)\) as the total rate of jumps from \(x \in V\).\\

Under these assumptions, the invariant measure is unique \cite{No}. Furthermore, the stationarity condition characterizing the invariant measure \(\pi_N\) is given by:
\begin{equation}\label{rice}
	\pi_N(x)\sum_{y: (x,y)\in E} r_N(x,y)=\sum_{y:(y,x)\in E} \pi_N(y)r_N(y,x)\,, \qquad \forall x\in V\,.
\end{equation}

We consider the case where the rates are exponentially small. This is a classic framework used, for example, in \cite{FW} to describe the effective fluctuations of diffusion processes and the behavior at very low temperatures of particle systems. It is also a general model of a multiscale Markov chain in a regime on which there are dynamic and static large deviations.  Specifically, we assume that the following condition holds throughout the rest of the paper.

\begin{assumption}[See \cite{OS2}]\label{assumption-1}
	The rates of jump $(r_N(x,y))_{(x,y)\in E}$ are exponentially small with $N$, namely, the following limits exist and are finite \begin{equation*}\Delta(x,y):=-\lim_{N\to +\infty} N^{-1}\log r_N(x,y)\geq 0\,, \qquad \forall(x,y)\in E\,.
	\end{equation*}
\end{assumption}

Define a sequence of functions $W_N$ by the relation $\pi_N(x)=:e^{-NW_N(x)}$, then equation \eqref{rice} uniquely defines $(W_N)_{N\in \mathbb N}$. The study of large deviations of $(\pi_N)_{N\in \mathbb N}$ is related to the study of the limit of the sequence $(W_N)_{N\in \mathbb N}$. The following lemma shows that the limit can be characterized as a solution of a discrete equation.  
\begin{lemma}[HJ equation]\label{lemma:HJe}
	Consider the sequence of probability measures $(\mu_N)_{N\in \mathbb N}$ solutions to \eqref{rice} and assume that there exist the limits
	\begin{align}
		&W(x):=-\lim_{N\to +\infty}N^{-1}\log \mu_N(x)\,, \qquad \forall x\in V\,. \label{ipo}
	\end{align} 
	Then $W$ satisfies the following set of equations 
	\begin{equation}\label{1HJ}
		W(x)+\min_{y:(x,y)\in E}\left\{\Delta(x,y)\right\}=\min_{y:(y,x)\in E}\left\{W(y)+\Delta(y,x)\right\}\,,\qquad \forall x\in V\,.
	\end{equation}
\end{lemma}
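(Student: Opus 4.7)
The plan is to take $-N^{-1}\log$ of the stationarity identity \eqref{rice} and pass to the limit $N\to\infty$, using the elementary finite-sum Laplace principle: if $(a_N^{(i)})_{N}$ for $i=1,\dots,k$ is a finite family of positive sequences with $-N^{-1}\log a_N^{(i)}\to \alpha_i$, then
$$-\frac{1}{N}\log\sum_{i=1}^k a_N^{(i)}\;\longrightarrow\; \min_{1\le i\le k}\alpha_i,$$
which follows from sandwiching the sum between $\max_i a_N^{(i)}$ and $k\,\max_i a_N^{(i)}$ and observing that $N^{-1}\log k\to 0$. Since $V$ and hence every in- and out-neighborhood is finite, this principle applies with uniform constants.

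First, I would treat the left-hand side of \eqref{rice}. By hypothesis \eqref{ipo} we can write $\mu_N(x)=e^{-N(W(x)+o(1))}$, and Assumption \ref{assumption-1} gives $r_N(x,y)=e^{-N(\Delta(x,y)+o(1))}$. Applying the principle above to the finite sum $\sum_{y:(x,y)\in E}r_N(x,y)$ and multiplying by $\mu_N(x)$ yields
$$-\frac{1}{N}\log\!\left(\mu_N(x)\!\!\sum_{y:(x,y)\in E}\!\! r_N(x,y)\right)\;\longrightarrow\; W(x)+\min_{y:(x,y)\in E}\Delta(x,y).$$
Next, for the right-hand side each summand $\mu_N(y)\,r_N(y,x)$ has exponential rate $W(y)+\Delta(y,x)$, so the same principle, now with the index $y$ ranging over the nonempty set of in-neighbors of $x$ (nonempty by strong connectivity of $\mathcal G$), gives
$$-\frac{1}{N}\log\!\!\sum_{y:(y,x)\in E}\!\!\mu_N(y)\,r_N(y,x)\;\longrightarrow\;\min_{y:(y,x)\in E}\bigl\{W(y)+\Delta(y,x)\bigr\}.$$
Equating the two limits via \eqref{rice} yields the discrete Hamilton--Jacobi equation \eqref{1HJ}.

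There is no real obstacle: the only things to check are that both minima are taken over nonempty sets (guaranteed by the strong connectivity assumption on $\mathcal G$) and that the $o(1)$ error terms coming from $W_N(x)\to W(x)$ and from $-N^{-1}\log r_N(x,y)\to \Delta(x,y)$ remain uniform in $y$ when we pass the limit inside the finite sums, which is automatic since each neighborhood is finite. Should one wish to allow $W(x)=+\infty$, the same argument works by standard extended-real conventions, but in the situation of interest (in which $\mu_N$ is a probability measure and hence $W\ge 0$ with at least one vertex attaining the value $0$) the relevant minima on the right-hand side are automatically finite whenever $W(x)<\infty$ on the left.
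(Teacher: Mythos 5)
Your proof is correct and follows essentially the same route as the paper: the authors likewise take $\lim_{N\to\infty}N^{-1}\log(\cdot)$ of both sides of \eqref{rice} and invoke the finite-sum ``largest exponent wins'' principle, only stating it more tersely. Your version merely fills in the sandwiching argument and the nonemptiness/finiteness checks that the paper leaves implicit.
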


\begin{proof}
	Since in \eqref{rice} there are finite sums, considering $\lim_{N\to +\infty}\frac{1}{N}\log(\cdot)$ on both sides, using Assumption \ref{assumption-1}, we obtain the result by the following basic fact. Given a finite collection $\{\left(a^i_N\right)_{N\in\mathbb{N}};i=1,\ldots,k\}$ of sequences such that there exist the limits $r^i:=\lim_{N\to +\infty}\frac{\log a^i_N}{N}$, it holds 
		$$
		\lim_{N\to + \infty}\frac{1}{N}\log\left(\sum_{i=1}^ka^i_N\right)=\max_i r^i\,.
		$$
		 The proof is elementary and this result is sometimes called the basic principle of large deviations ("the largest exponent wins", see for example \cite{dH} formulas I.1, I.2). Then equation \eqref{1HJ} holds.
\end{proof}

This is precisely what occurs in the continuous case of diffusions, where the HJ equation is derived as the vanishing noise limit of the stationary equation, see \cite{FW}. In line with the classical FW theory for diffusions, we refer to the limiting equation \eqref{1HJ} as a \emph{discrete Hamilton–Jacobi equation} (discrete HJ equation).\\

It turns out that, unlike equation \eqref{rice}, the discrete HJ equation does not always admit a unique solution, as occurs in the continuous setting. An interesting feature of the discrete framework is that it allows for a detailed characterization of the set of solutions. To achieve this, we introduce an additional assumption.


\begin{figure}
	\centering
	\subfigure[A graph $(V,E)$ satisfying Assumption \ref{assumption}. The edges with $\Delta$ equal to zero are highlighted in red.]{
		\centering
		\xygraph{
			!{<0cm,0cm>;<1cm,0cm>:<0cm,1cm>::}
			!{(0,0) }*+{\bullet_{a}}="a"
			!{(2,1.3) }*+{\bullet_{b}}="b"
			!{(4,0) }*+{\bullet_{c}}="c"
			!{(3,-1.3) }*+{\bullet_{d}}="d"
			!{(1,2.4) }*+{\bullet_{e}}="e"
			!{(3,2) }*+{\bullet_{f}}="f"
			!{(4.2,1.6) }*+{\bullet_{g}}="g"
			!{(0.3,-1.7) }*+{\bullet_{h}}="h"
			!{(6,0) }*+{\bullet_{\alpha}}="x"
			!{(8.8,1.3) }*+{\bullet_{\beta}}="y"
			!{(10,0) }*+{\bullet_{\gamma}}="z"
			!{(7,-1.4) }*+{\bullet_{\delta}}="i"
			!{(7.1,1.8) }*+{\bullet_{\epsilon}}="j"
			!{(6.3,2.1) }*+{\bullet_{\zeta}}="k"
			!{(11,2) }*+{\bullet_{\eta}}="l"
			"a":@[red]@/^0.5cm/"b"
			"b":@[red]@/^0.4cm/"c"
			"c":@[red]@/^0.3cm/"d"
			"d":@[red]@/^0.6cm/"a"
			"f":@/^0.3cm/"e"
			"f":@[red]@/^0.3cm/"g"
			"g":@[red]@/^0.4cm/"c"
			"h":@[red]@/^0.3cm/"a"
			"a":"h"
			"d":@/_0.6cm/"x"
			"k":@[red]@/_0.5cm/"f"
			"x":@[red]@/^0.5cm/"y"  
			"y":@[red]@/^0.4cm/"z" 
			"z":@[red]@/^0.7cm/"i"
			"i":@[red]@/^0.3cm/"x"
			"x":@/^0.25cm/"j"
			"x":@/^0.3cm/"k"
			"z":@/^0.3cm/"l"
			"l":@[red]@/_0.4cm/"j"
			"j":@[red]@/^0.3cm/"y"
			"e":@[red]@/_0.4cm/"a"}
		} 
	\subfigure[The corresponding subgraph $(V,E_0)$. ]{
		\centering
		\mbox{ \xygraph{
				!{<0cm,0cm>;<1cm,0cm>:<0cm,1cm>::}
				!{(0,0) }*+{\bullet_{a}}="a"
				!{(2,1.3) }*+{\bullet_{b}}="b"
				!{(4,0) }*+{\bullet_{c}}="c"
				!{(3,-1.3) }*+{\bullet_{d}}="d"
				!{(1,2.4) }*+{\bullet_{e}}="e"
				!{(3,2) }*+{\bullet_{f}}="f"
				!{(4.2,1.6) }*+{\bullet_{g}}="g"
				!{(0.3,-1.7) }*+{\bullet_{h}}="h"
				!{(6,0) }*+{\bullet_{\alpha}}="x"
				!{(8.8,1.3) }*+{\bullet_{\beta}}="y"
				!{(10,0) }*+{\bullet_{\gamma}}="z"
				!{(7,-1.4) }*+{\bullet_{\delta}}="i"
				!{(7.1,1.8) }*+{\bullet_{\epsilon}}="j"
				!{(6.3,2.1) }*+{\bullet_{\zeta}}="k"
				!{(11,2) }*+{\bullet_{\eta}}="l"
				"a":@[red]@/^0.5cm/"b"
				"b":@[red]@/^0.4cm/"c"
				"c":@[red]@/^0.3cm/"d"
				"d":@[red]@/^0.6cm/"a"
				"k":@[red]@/_0.5cm/"f"
				"f":@[red]@/^0.3cm/"g"
				"g":@[red]@/^0.4cm/"c"
				"h":@[red]@/^0.3cm/"a"
				"x":@[red]@/^0.5cm/"y"  
				"y":@[red]@/^0.4cm/"z" 
				"z":@[red]@/^0.7cm/"i"
				"i":@[red]@/^0.3cm/"x"
				"l":@[red]@/_0.4cm/"j"
				"j":@[red]@/^0.3cm/"y"
				"e":@[red]@/_0.4cm/"a"
			}
		}
	}\caption{}\label{fig:Assumption21}
\end{figure}

\begin{assumption}\label{assumption}
	For any $x\in V$, there is exactly one $(x,y)\in E$ such that $\Delta(x,y)=0$. Call $E_0\subseteq E$ the set of edges $(x,y)\in E$ having $\Delta(x,y)$ equal to zero (see Figure \ref{fig:Assumption21}).
\end{assumption}

This is a technical assumption that allows us to give a simple and complete characterization of the solutions of the HJ equation. In particular, the edges in $E_0$ are those across which the chain jumps at zero cost and represent the discrete counterpart of critical points and $\omega$-limit sets. Under this assumption, the geometric structure of $(V,E_0)$ simplifies, and elementary cycles play the same role as stable equilibrium points in continuous dynamics.
In the general case, however, there may be  more variations, making a full characterization of solutions to the discrete HJ equations more challenging. Moreover, the meaning of the assumption is that at each vertex of the graph there is exactly one typical escape direction, corresponding to the unique outgoing edge with zero cost. In absence of fluctuations, the chain follows this unique typical edge, so that our model can be viewed as a perturbation of a discrete deterministic dynamical system (apart from the holding times). We  also exclude cases in which there exists a vertex for which the total jump rate is exponentially small, making it  an attractive point modulo fluctuations.

By Assumptions \ref{assumption-1} and \ref{assumption},  it follows that the function $W:V\to \mathbb R$ of Lemma \ref{lemma:HJe} has to satisfy the following discrete HJ equation 
\begin{equation}\label{1HJ-noabs}
	W(x)=\min_{y:(y,x)\in E}\left\{W(y)+\Delta(y,x)\right\}\,,\qquad \forall x\in V\,.
\end{equation}

\begin{remark}\label{discreteMC}
In this paper we consider continuous-time Markov chains but similar results hold in the case of discrete time Markov chains. We analyze the system at an exponential scale, and to any discrete-time Markov chain with exponentially small transition probabilities one can associate a continuous-time chain having the same asymptotic rates. Some of the related mathematical structure, such as the sample-path large deviations, will have a different form, but the discrete HJ equation will remain the same. Since for a discrete time Markov chain the transition probabilities are normalized to one, this corresponds to the fact that for each $x\in V$ there is at least one edge in $E_0$ exiting from $x$. This, in particular, implies that the HJ equation in the discrete-time case is always of the form \eqref{1HJ-noabs}. The Markov chain tree Theorem also holds in the discrete-time setting, and therefore the discrete-time case corresponds to a special case of the continuous-time one. We will briefly discuss the connection with the discrete weak KAM theory in Section \ref{sec:DweakKAM} in terms of discrete-time Markov chains.
\end{remark}

\section{Freidlin-Wentzell solution}

\subsection{Graphs and notation}

Some general remarkable classes of graphs and their properties are needed in order to discuss solutions to the discrete HJ equation. 

\begin{definition}\label{def-dir-ar}
	Let $\mathcal G$ be a directed graph. An \emph{arborescence} $\tau\subseteq \mathcal G$ directed toward $x\in V$ is a spanning subgraph of $\mathcal G$ such that:
	
	\smallskip
	
	\noindent 1) For each vertex $y\neq x$ there is exactly one directed edge exiting from $y$ and belonging to $\tau$.
	
	\noindent 2) For any $y\in V$ there exists a unique directed path from $y$ to $x$ in $\tau$.
	
	\noindent 3)  There are no edges exiting from $x$.
	
	\smallskip
	
	\noindent Let  $\mathcal T_x$ be the set of \emph{arborescences} of $\mathcal G$ directed toward $x\in V$ and $\mathcal T$ be the set of all directed arborescences $\mathcal T=\cup_{x\in V}\mathcal T_x$.
\end{definition}
If the graph is strongly connected, then clearly $\mathcal T_x$ is not empty for any $x\in V$.

\smallskip

Given a weight function $Q:E\to \mathbb R$, to any arborescence $\tau$ the following weights are associated
\begin{equation}
	\label{wfw}
	\left\{
	\begin{array}{l}
		Q(\tau):=\sum_{e\in \tau} Q(e)\,,\\
		\mathcal P_Q(\tau):=\prod_{e\in \tau} Q(e)\,.
	\end{array}
	\right.
\end{equation}
In the formulas above, the products and sums are taken over all directed edges $e$ that belong to the arborescence $\tau$. For example, if the weight function is $r_N$, then
$\mathcal P_{r_N}(\tau)=\prod_{e\in \tau} r_N(e)$; and if the weight function is $\Delta$, then
$\Delta(\tau)=\sum_{e\in \tau} \Delta(e)$. 
A similar notation is used for subgraphs that are not arborescences.

\smallskip
A cycle in $\mathcal{G}$ is a sequence $(z_1, \dots , z_n)$ of distinct vertices $z_i\in V$ such that $(z_i,z_{i+1})\in E$ where $z_{n+1}\equiv z_1$ .

\subsection{Invariant measures and the Matrix Tree Theorem}
One of the primary goals of this paper is to establish a connection between the large deviations of the sequence $(\pi_N(x))_{x\in V}$, representing the invariant measures of the chains, and the discrete HJ equation. This aims to develop a discrete analogue of the classical FW framework for diffusions \cite{FW}.\\

The unique invariant measure of an irreducible Markov chain has an interesting combinatorial representation.
The Markov chain Matrix Tree Theorem claims that the invariant measure is given by
\begin{equation}\label{eq:FWformula}
	\pi_N(x)=\frac{\sum_{\tau\in \mathcal T_x}\mathcal P_{r_N}(\tau)}{\sum_{\tau\in \mathcal T}\mathcal P_{r_N}(\tau)}\,.
\end{equation}
See, for example \cite{FW} and \cite{PT}, where you can also find an interpretation of \eqref{eq:FWformula} in terms of determinants. For the sake of completeness, the particularly elegant proof of the Markov chain Matrix Tree Theorem from \cite{PT} is outlined in  Appendix \ref{app:MC}.\\		

The Matrix Tree Theorem allows for the immediate deduction of a large deviations principle for the sequence of invariant measures.

\begin{lemma}\label{FWsolemma}
	The sequence of invariant measures $(\pi_N)_{N\in \mathbb N}$ defined in \eqref{eq:FWformula} satisfies a large deviations principle with rate functional
	\begin{equation} \label{FW solution}
		\mathscr{FW}(x):=\lim_{N\to +\infty}-\frac 1N\log \pi_N(x)=\inf_{\tau\in \mathcal T_x} \Delta(\tau)-\inf_{\tau\in \mathcal T} \Delta(\tau)\,, \qquad x\in V\,.
	\end{equation}
	The function $\mathscr{FW}$ is called \emph{$FW$-quasipotential} and it is a solution to  the discrete HJ equation \eqref{1HJ}. 
\end{lemma}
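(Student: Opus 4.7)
The proof naturally splits into two parts: (a) establishing the exponential asymptotics of $\pi_N(x)$, and (b) verifying that $\mathscr{FW}$ solves \eqref{1HJ}.

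For (a), the plan is to start from the matrix tree representation \eqref{eq:FWformula} and take $-N^{-1}\log$ of both sides. By Assumption \ref{assumption-1}, each edge rate satisfies $r_N(e) = \exp(-N(\Delta(e) + o(1)))$, hence for any arborescence $\tau$,
\begin{equation*}
\mathcal P_{r_N}(\tau) = \prod_{e\in\tau} r_N(e) = \exp\bigl(-N(\Delta(\tau) + o(1))\bigr).
\end{equation*}
The sets $\mathcal T_x$ and $\mathcal T$ are finite, and $\mathcal T_x \neq \emptyset$ by strong connectivity, so the same "largest exponent wins" principle used in the proof of Lemma \ref{lemma:HJe} yields
\begin{equation*}
-N^{-1}\log \sum_{\tau \in \mathcal T_x} \mathcal P_{r_N}(\tau) \longrightarrow \inf_{\tau \in \mathcal T_x} \Delta(\tau), \qquad -N^{-1}\log \sum_{\tau \in \mathcal T} \mathcal P_{r_N}(\tau) \longrightarrow \inf_{\tau \in \mathcal T} \Delta(\tau).
\end{equation*}
Subtracting these two limits produces \eqref{FW solution}; both infima are finite because $\Delta$ is everywhere finite on $E$.

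For (b), the plan is to invoke Lemma \ref{lemma:HJe} with $\mu_N = \pi_N$. Part (a) has just established that the limit $\mathscr{FW}(x) = -\lim_N N^{-1}\log \pi_N(x)$ exists and is finite for every $x \in V$, so hypothesis \eqref{ipo} of Lemma \ref{lemma:HJe} is fulfilled, and the conclusion that $\mathscr{FW}$ satisfies \eqref{1HJ} follows immediately.

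No substantial obstacle is expected. The only mildly delicate point is the "largest exponent wins" step in (a), but since $|\mathcal T_x|$ and $|\mathcal T|$ are fixed integers independent of $N$, the elementary sandwich $\max_i a_i \leq \sum_i a_i \leq k\max_i a_i$ for $k$ nonnegative summands immediately upgrades the edgewise asymptotics to asymptotics of the sums, so no uniformity argument or finer large deviation machinery is required.
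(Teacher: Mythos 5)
Your proposal is correct and follows essentially the same route as the paper: the paper likewise applies the ``largest exponent wins'' principle to the matrix tree formula \eqref{eq:FWformula} under Assumption \ref{assumption-1} to obtain \eqref{FW solution}, and then invokes Lemma \ref{lemma:HJe} with $\mu_N=\pi_N$ to conclude that $\mathscr{FW}$ solves \eqref{1HJ}. You merely spell out the elementary sandwich bound that the paper leaves implicit; the only detail the paper adds that you omit is the observation that the subtracted constant normalizes $\mathscr{FW}$ so that $\inf_{x\in V}\mathscr{FW}(x)=0$.
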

\begin{proof}
	Formula \eqref{eq:FWformula} together with Assumption \ref{assumption-1} implies immediately (again by the basic principle of large deviations "the largest exponent wins" as discussed in the proof of Lemma \ref{lemma:HJe})  that the sequence $(\pi_N)_{N\in \mathbb N}$ satisfies \eqref{FW solution}. The function $\mathscr{FW}:V\to \mathbb R^+$ is the LD rate functional of the invariant measure. By Lemma \ref{lemma:HJe}, $\mathscr{FW}$ is a solution of equation \eqref{1HJ}.
	Note that the additive constant in \eqref{FW solution} implies that $\inf_{x\in V} \mathscr{FW}(x)=0$.    
\end{proof} 

\section{Viscosity solutions}

Observe that the discrete HJ equations \eqref{1HJ} and \eqref{1HJ-noabs} form a collection of equations that must be satisfied at each node of the graph. In general, this discrete HJ equation does not have a unique solution.

\smallskip

Solutions to \eqref{1HJ-noabs} can be naturally characterized, similarly to viscosity solutions for continuous HJ equations, using subsolution and supersolution conditions. Differently from the continuous setting where the notion of viscosity solutions introduces a class of weak solutions that do not satisfy the equation in classical sense, in the discrete setting we are not enlarging the set of solutions of \eqref{1HJ-noabs}, that is algebraic in nature and has a well defined set of solutions. What happens is that, in order to have a characterization of the solutions of \eqref{1HJ-noabs} it is very useful to introduce a notion of subsolution and one of supersolution. The sets of subsolutions and supersolutions are easier to be characterized and the set of solutions is the intersection of the two. The definitions of subsolutions and supersolutions parallel those in the continuous setting and, indeed, are very useful for comparison principles and scaling limits.

\begin{definition}\label{def:inout-comp}
	A function $W:V\to\mathbb{R}$ is a \emph{subsolution} of equation \eqref{1HJ-noabs} if the following inequality holds:
	\begin{equation}\label{subsol}
		W(x)\leq W(y)+\Delta(y,x)\,, \qquad \forall(y,x)\in E\,.
	\end{equation}

	A function $W:V\to\mathbb{R}$ is a \emph{supersolution} of \eqref{1HJ-noabs} if, for any $x\in V$, there exists $y^*=y^*(x)\in V$ such that $(y^*,x)\in E$ and 
	\begin{equation}\label{skeleton}
		W(x)=W(y^*)+\Delta(y^*,x)\,.
	\end{equation}
	Let $E_W\subseteq E$ be the set of edges where equality \eqref{skeleton} is verified and we call it the \emph{Skeleton} of $W$.
	A function $W$ is a \emph{solution} of \eqref{1HJ-noabs} if it is both a subsolution and a supersolution.

	Call $\underline {\mathcal W}$ and $\upbar {\mathcal W}$ the set of subsolutions and supersolutions, respectively, and call $\mathcal W:=\underline {\mathcal W}\cap \upbar {\mathcal W}$ the set of solutions.
\end{definition}

Since adding an arbitrary constant to a solution leads to another solution, it is natural to consider the set of normalized solutions, which we refer to as
\begin{equation}
	\mathcal{W}_0:=\left\{W\in \mathcal W : \inf_{x\in V}W(x)=0\right\}\,.
\end{equation}
As already observed in the proof of Lemma \ref{FWsolemma}, the $\mathscr{FW}$ solution belongs to $\mathcal W_0$.\\
\medskip

\subsection{Maps and unicyclic compontents}

The following  classes of subgraphs will be relevant for the characterization of the viscosity solutions of the discrete HJ equation.

\begin{definition}\label{def-dir-cy}
	Let $\mathcal G$ be a directed graph. A \emph{Map} $\mathfrak f$ is a spanning subgraph of $\mathcal G$ such that for each vertex $x\in V$ there exists exactly one edge $(x,y)\in \mathfrak f$ exiting from $x$.
\end{definition}
The name is due to the fact that a map $\mathfrak f$ is encoded by an injective function $f:V\to V$ such that $\mathfrak f=\Big(V,(x,f(x))_{x\in V}\Big)$.

\smallskip

The last definition refers to subgraphs with only one cycle.

\begin{definition}\label{def:inout}
	A subgraph $\mathfrak u$ of a directed graph $\mathcal G$ is called an \emph{inward oriented unicyclic component}, or simply \emph{in-unicyclic component}, if 
	
	\begin{itemize}
		\item there exists a unique directed cycle $C_{\mathfrak u}$ in $\mathfrak u$,
		\item for any vertex $x$ in $\mathfrak u$ that lies outside the cycle $C_{\mathfrak u}$, there is a unique edge in $\mathfrak u$ exiting $x$, and a directed path from $x$ to $C_{\mathfrak u}$.
	\end{itemize}
	A subgraph $\mathfrak o$ of a directed graph $\mathcal G$ is called an \emph{outward oriented unicyclic component}, or simply \emph{out-unicyclic component}, if 
	
	\begin{itemize}
		\item there exists a unique directed cycle $C_{\mathfrak o }$ in $\mathfrak o$,
		\item for any vertex $x$ in $\mathfrak o$ that lies outside the cycle $C_{\mathfrak o}$, there is a unique edge in $\mathfrak o$ entering $x$, and there exists a directed path from $C_{\mathfrak o}$ to $x$. 
	\end{itemize}
	
	A spanning in-(out-)unicyclic component $\mathfrak u(\mathfrak{o})$, i.e. one that contains all the vertices in $V$,  is  called a \emph{in-(out-)unicyclic graph}. Call $\mathcal U^{\mathrm{in}}$ and  $\mathcal U^{\mathrm{out}}$ the collection of all the in-unicyclic and out-unicyclic graphs, respectively. 
	On a strongly connected graph $\mathcal{G}$, every node belongs to the cycle of at least one in-unicyclic graph and at least one out-unicyclic graph  of $\mathcal{G}$. Call $\mathcal U_x^{\mathrm{in}}$ and $\mathcal U_x^{\mathrm{out}}$ the set of in-unicyclic and out-unicyclic graphs with $x$ belonging to the unique cycle.
\end{definition}


\begin{figure}\label{fig:cycleroot}
	\centering
	\mbox{ \xygraph{
			!{<0cm,0cm>;<1cm,0cm>:<0cm,1cm>::}
			!{(0,0) }*+{\bullet_{a}}="a"
			!{(2,1.3) }*+{\bullet_{b}}="b"
			!{(4,0) }*+{\bullet_{c}}="c"
			!{(3,-1.3) }*+{\bullet_{d}}="d"
			!{(1,2.4) }*+{\bullet_{e}}="e"
			!{(3,2) }*+{\bullet_{f}}="f"
			!{(4.2,1.6) }*+{\bullet_{g}}="g"
			!{(0.3,-1.7) }*+{\bullet_{h}}="h"
			!{(6,0) }*+{\bullet_{\alpha}}="x"
			!{(8.8,1.3) }*+{\bullet_{\beta}}="y"
			!{(10,0) }*+{\bullet_{\gamma}}="z"
			!{(7,-1.4) }*+{\bullet_{\delta}}="i"
			!{(7.1,1.8) }*+{\bullet_{\epsilon}}="j"
			!{(6.3,2.1) }*+{\bullet_{\zeta}}="k"
			!{(11,-1) }*+{\bullet_{\eta}}="l"
			"a":@/^0.5cm/"b"
			"b":@/^0.4cm/"c"
			"c":@/^0.3cm/"d"
			"d":@/^0.6cm/"a"
			"e":@/_0.3cm/"a"
			"f":@/^0.3cm/"g"
			"g":@/^0.4cm/"c"
			"h":@/^0.3cm/"a"
			"x":@/^0.5cm/"y"  
			"y":@/^0.4cm/"z"  
			"z":@/^0.7cm/"i"
			"i":@/^0.3cm/"x"
			"x":@/^0.25cm/"j"
			"x":@/^0.3cm/"k"
			"z":@/^0.3cm/"l"
		}
	}
	\caption{An in-unicyclic component on the left and an out-unicyclic component on the right}
\end{figure}

The structure of maps is well known; refer, for example, to \cite{P} for the following basic fact: every map is the spanning disjoint union of in-unicyclic components. In particular, every in-unicyclic graph is a map.\\

Moreover, note that if the orientation of all the edges in a map is reversed, the resulting graph will have exactly one edge entering each node, forming a spanning disjoint union of out-unicyclic components; conversely, for any spanning collection of disjoint out-unicyclic components, reversing the orientation of its edges produces a map.\\

By Assumption \ref{assumption}, the graph $(V,E_0)$ is a map. Call $\mathcal{C}$ the set of cycles of the map $(V,E_0)$, then $1\leq |\mathcal{C}|\leq \left\lceil\frac{|V|}{2}\right\rceil$ ($\lceil\cdot\rceil$ represents the integer part). The value one is obtained in the case of one in-unicyclic graph and the second bound follows since there are no loops (edges of the type $(x,x)$).

For any $C_i\in \mathcal C$, $i=1,\dots, |\mathcal C|$, call $\mathfrak u_i$ the associated in-unicyclic component of $(V,E_0)$ whose vertices are the \emph{basin of attraction of $C_i$}, i.e. the set of $x\in V$ such that there is a path in $(V,E_0)$ from $x$ to $C_i$. For simplicity of notation we write also $i\in\mathcal C$ to denote the cycle $C_i$.

\medskip

\subsection{Charachterization of $\underline {\mathcal W}$ and  $\upbar{\mathcal W}$}

\subsubsection{The set of subsolutions  $\underline {\mathcal W}$}  A discrete pseudo quasimetric is a function $d:V\times V\to \mathbb [0,+\infty)$ that satisfies
\begin{itemize}
	\item triangle inequality: $d(x,z)\leq d(x,y)+d(y,z)$, for any $x,y,z\in V$,
	\item point inequality: $d(x,x)=0$, for any $x\in V$. 
\end{itemize}
It may be not symmetric, i.e. there may exist $x,y\in V$ such that $d(x,y)\neq d(y,x)$, and not separable, i.e. there may exist $x,y\in V$, with $x\neq y$, such that $d(x,y)=0$.\\

The weight function $\Delta: E\to \mathbb [0,+\infty)$ induces on the graph $(V,E)$ a pseudo quasimetric $d$ defined as
\begin{equation}\label{defd}
	d(x,y):=\inf_{\gamma_{x,y}\in \mathcal{R}_{x,y}}\Delta(\gamma_{x,y})\,,
\end{equation}
where $\mathcal{R}_{x,y}$ is the set of directed paths $\gamma_{x,y}$ from $x$ to $y$: a directed path from $x$ to $y$ is given by $\gamma_{x,y}:= (x_0, x_1, \dots x_n)$ such that $x_i\in V$,  $(x_i,x_{i+1})\in E$, for all $i$ and  $x_0=x, x_n=y$. Call  $\Delta(\gamma_ {x,y})$ the length of the path $\gamma_ {x,y}$ and observe that $d(x,y)=d(y,x)=0$ when $x,y$ belong to the same cycle in the map $(V,E_0)$.

Given $T,S\subseteq V$,  define 
\[d(T,y):=\inf_{x\in T}d(x,y), \quad d(x,S):=\inf_{y\in S}d(x,y),\quad d(T,S):=\inf_{x\in T,y\in S}d(x,y) \]
the distance between a set and a vertex, a vertex and a set, and two sets, respectively.


Call respectively $\bar \gamma_{x,y}, \bar \gamma_{T,y}, \bar \gamma_{T,S}$ the geodetic paths  achieving the minimum in the above definitions (one of the paths chosen arbitrarily in the case of several minimizers).

Given a vertex $x$, it belongs to the exiting basin of attraction of the cycle $C_i\in \mathcal C$
if $d(C_i,x)\leq d(C_j,x)$ for any $j\neq i$. In the case where there is equality among two or more cycles, the node  $x$  is assigned to the basin of attraction of an arbitrary cycle among the minimizers.

\begin{definition}
	Call $\mathrm{Lip}_1$  the set of one-Lipschitz functions with Lipschitz constant one that are identified by the set of $|V|\times (|V|-1)$ inequalities
	\begin{equation}\label{lipW}
		W(x)-W(y)\leq d(y,x)\,, \qquad \forall x,y\in V\,.
	\end{equation}
\end{definition}

The following result links subsolutions and one-Lipschitz functions.

\begin{lemma}\label{lometto}
	The set of subsolutions and the set of one-Lipschitz functions define the same polyhedron, i.e. $\underline {\mathcal W}=\mathrm{Lip}_1$.
\end{lemma}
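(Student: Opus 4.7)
The plan is to prove the two inclusions $\mathrm{Lip}_1 \subseteq \underline{\mathcal W}$ and $\underline{\mathcal W} \subseteq \mathrm{Lip}_1$ separately. Both are direct consequences of the definitions once one notices that the Lipschitz inequality \eqref{lipW} is the ``path-version'' of the subsolution inequality \eqref{subsol}, and that $d(y,x)$ is by definition the infimum over directed paths of the additive weight $\Delta$.

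For the easier inclusion $\mathrm{Lip}_1 \subseteq \underline{\mathcal W}$, I would fix an edge $(y,x)\in E$ and use the length-one path $(y,x)\in\mathcal R_{y,x}$ in the definition \eqref{defd} to conclude $d(y,x)\le \Delta(y,x)$. Then the Lipschitz inequality $W(x)-W(y)\le d(y,x)$ immediately yields $W(x)\le W(y)+\Delta(y,x)$, which is the subsolution condition \eqref{subsol}.

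The reverse inclusion $\underline{\mathcal W} \subseteq \mathrm{Lip}_1$ is the substantive step, and is obtained by a telescoping argument along a directed path. Given $W\in\underline{\mathcal W}$ and any two vertices $x,y\in V$, take any directed path $\gamma_{y,x}=(z_0,z_1,\dots,z_n)\in \mathcal R_{y,x}$ with $z_0=y$, $z_n=x$. Applying \eqref{subsol} to each edge $(z_i,z_{i+1})\in E$ gives $W(z_{i+1})-W(z_i)\le \Delta(z_i,z_{i+1})$. Summing,
\begin{equation}
W(x)-W(y)=\sum_{i=0}^{n-1}\bigl(W(z_{i+1})-W(z_i)\bigr)\le \sum_{i=0}^{n-1}\Delta(z_i,z_{i+1})=\Delta(\gamma_{y,x}).
\end{equation}
Taking the infimum over $\gamma_{y,x}\in \mathcal R_{y,x}$, which is nonempty by strong connectedness of $\mathcal G$, yields $W(x)-W(y)\le d(y,x)$, hence $W\in\mathrm{Lip}_1$.

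I do not anticipate any real obstacle: the only small care needed is that $\mathcal R_{y,x}\neq\emptyset$ for every pair $(y,x)$, which is exactly the hypothesis that $(V,E)$ is strongly connected, stated at the beginning of Section~\ref{sec:frame}. Since the proof only exploits the triangle-inequality-style structure of $d$ and the additivity of $\Delta$ along paths, it does not require Assumption \ref{assumption}, so the identification $\underline{\mathcal W}=\mathrm{Lip}_1$ in fact holds in the more general setting of Assumption \ref{assumption-1}.
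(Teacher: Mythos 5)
Your proof is correct and follows essentially the same route as the paper's: the telescoping of the edgewise subsolution inequality along a directed path followed by an infimum over paths for $\underline{\mathcal W}\subseteq\mathrm{Lip}_1$, and the chain $W(x)-W(y)\le d(y,x)\le\Delta(y,x)$ on single edges for the converse. Your added remarks that strong connectedness guarantees $\mathcal R_{y,x}\neq\emptyset$ and that Assumption \ref{assumption} is not needed are accurate but do not change the argument.
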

\begin{proof}
	Let $W$ be a subsolution and let $\gamma_{yx}=(y=x_0,x_1,\dots , x_n=x)$ be a path from $y$ to $x$. Observe that, for any $i$, 
	$W(x_{i+1})-W(x_i)\leq \Delta(x_i,x_{i+1})$ and the left hand side of the sum over $i$ of this inequalities is telescopic.  Inequality \eqref{lipW} follows by taking the infimum over all paths. Conversely if \eqref{lipW} holds then, for any $(y,x)\in E$,
	\[
	W(x)-W(y)\leq d(y,x)\leq \Delta(y,x).
	\]
\end{proof}
Since a function in $\mathrm{Lip}_1$ must be constant along the cycles in $\mathcal C$, the space  $\mathrm{Lip}_1$ is a $|V|-\sum_{C_i\in \mathcal C}(|C_i|-1)$ dimensional polyhedron.\\

\begin{remark}
	Every polyhedron $\mathfrak{P}$ can be written as the Minkowski sum $\mathfrak{P}=\mathfrak{C}+\mathfrak{B}$, where $\mathfrak{C}$ is a convex cone and $\mathfrak{B}$ is a polytope, i.e. a bounded polyhedron, see for example Section 5.3 of \cite{Sch}. In the case of $\mathrm{Lip}_1$, the cone $\mathfrak{C}$ is reduced to a single line corresponding to functions that are multiple of the identity. This characteristic is also inherited by the solutions of the discrete HJ equations. For this reason, we focus solely on the normalized solutions by adding a suitable constant function. We do not delve further into these discrete geometric features.
\end{remark}

\subsubsection{The set of super solutions $\upbar{\mathcal W}$ }
Consider an out-unicyclic component $\mathfrak o$ such that its unique cycle is one of the cycles in $\mathcal C$ and define the function $\omega_{\mathfrak o}:V\to \mathbb{R}$ as follows:
\begin{itemize}
	\item $\omega_{\mathfrak o}(x)=0$ for any $x$ in the unique cycle,
	
	\item $\omega_{\mathfrak o}(x)=0$ for any $x\not \in \mathfrak o$,
	
	\item the values of $\omega_{\mathfrak o}(x)$ for the remaining $x\in \mathfrak o$ are uniquely determined by  the condition $\omega_{\mathfrak o}(a)-\omega_{\mathfrak o}(b)=\Delta(b,a)$, for any $(b,a)\in \mathfrak o$. 
\end{itemize}
The third condition uniquely identifies the values $\left(\omega_{\mathfrak o}(x)\right)_{x\in \mathfrak o}$ since for each $x\in \mathfrak o$ there is a unique path $\gamma$ going from the cycle to $x$ and then $\omega_{\mathfrak o}(x)=\sum_{e\in \gamma}\Delta(e)$.
Define also the indicator function of the nodes belonging to an out-cyclic component $\id_{\mathfrak o}:V\to\{0,1\}$ by fixing $\id_{\mathfrak o}(x)=1$ when $x\in \mathfrak o$ and zero otherwise.

A collection $(\mathfrak o_i)_{i\in I}$ of out-unicyclic components is called {\it spanning} if the cycle of each component $\mathfrak o_i$ is an element of $\mathcal C$, all the components are disjoint and each $x\in V$ belongs to one component.
\begin{proposition}\label{prop:super}
	A function $W:V\to \mathbb R$ is an element of $\upbar {\mathcal W}$, the set of supersolutions of the Hamilton Jacobi equation \eqref{1HJ-noabs}, if and only if there exists a spanning collection of out-unicyclic components $(\mathfrak{o}_{i})_{i\in I}$ and some values of the parameters $(\lambda_{i})_{i\in I}$, with $\lambda_i\in \mathbb R$, such that
	\begin{equation}\label{supersol}
		W=\sum_{i\in I} \left(\omega_{\mathfrak o_i}+\lambda_i \id_{\mathfrak o_i}\right)\,. 
	\end{equation} 
	
\end{proposition}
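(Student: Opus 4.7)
The plan is to prove both directions by linking a supersolution $W$ to its skeleton $E_W$ and exploiting the structure of maps together with Assumption \ref{assumption}.

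\textbf{Necessity.} Start from a supersolution $W$. For each $x\in V$ pick one edge $(y^*(x),x)\in E_W$ realising the equality \eqref{skeleton}; this yields a spanning subgraph $\mathfrak m^{-1}$ in which every vertex has exactly one \emph{incoming} edge. Reversing orientations produces a map $\mathfrak m$ in the sense of Definition \ref{def-dir-cy}, and by the structural fact recalled after that definition, reversing back shows that $\mathfrak m^{-1}$ is a spanning disjoint union of out-unicyclic components $(\mathfrak o_i)_{i\in I}$. It remains to identify the cycles and the values of $W$. Let $C=(z_1,\dots,z_n)$ be the unique cycle of some $\mathfrak o_i$. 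Summing the equalities $W(z_{k+1})=W(z_k)+\Delta(z_k,z_{k+1})$ around $C$ gives a telescoping sum equal to $\Delta(C)=\sum_k\Delta(z_k,z_{k+1})=0$. Since $\Delta\geq 0$ by Assumption \ref{assumption-1}, every edge of $C$ carries weight $\Delta=0$; by Assumption \ref{assumption} the unique outgoing $\Delta=0$ edge at each $z_k$ is determined, so $C$ coincides with a cycle of $(V,E_0)$, i.e.\ $C\in \mathcal C$. Along the cycle $W$ is therefore constant; define $\lambda_i$ to be this common value. For $x\in \mathfrak o_i$ outside the cycle, iterating the equality $W(x)=W(y^*(x))+\Delta(y^*(x),x)$ along the unique path from $C_{\mathfrak o_i}$ to $x$ in $\mathfrak o_i$ gives $W(x)=\lambda_i+\sum\Delta=\lambda_i+\omega_{\mathfrak o_i}(x)$. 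Since the components are disjoint and spanning, $\omega_{\mathfrak o_j}(x)=0$ and $\id_{\mathfrak o_j}(x)=0$ for $j\neq i$, so \eqref{supersol} holds.

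\textbf{Sufficiency.} Assume $W$ has the form \eqref{supersol} for some spanning family $(\mathfrak o_i)_{i\in I}$ with cycles in $\mathcal C$. Take any $x\in V$; by the spanning property, $x\in\mathfrak o_j$ for a unique $j$. If $x$ lies on the cycle $C_{\mathfrak o_j}\in\mathcal C$, let $(y^*,x)$ be its predecessor on that cycle. Then $\Delta(y^*,x)=0$, $y^*$ also lies on $C_{\mathfrak o_j}$, and both $W(x)$ and $W(y^*)$ equal $\lambda_j$, so \eqref{skeleton} is satisfied. If instead $x\in\mathfrak o_j\setminus C_{\mathfrak o_j}$, let $(y^*,x)$ be the unique edge of $\mathfrak o_j$ entering $x$; then $y^*\in\mathfrak o_j$ as well, and the defining identity of $\omega_{\mathfrak o_j}$ gives $\omega_{\mathfrak o_j}(x)=\omega_{\mathfrak o_j}(y^*)+\Delta(y^*,x)$. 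Using the disjointness of the components to discard the other summands, we conclude $W(x)=W(y^*)+\Delta(y^*,x)$, so $W\in\upbar{\mathcal W}$.

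\textbf{Main obstacle.} The delicate step is showing that every cycle appearing in the decomposition of $\mathfrak m^{-1}$ must belong to $\mathcal C$: this is where Assumptions \ref{assumption-1} and \ref{assumption} combine, via the telescoping-sum argument, to force $\Delta=0$ along the cycle and hence to pin down the cycle uniquely as one of the prescribed cycles of $(V,E_0)$. Once this is granted, the remaining bookkeeping, namely reading off $\lambda_i$ on each cycle and reconstructing $W$ along the incoming trees by iteration, is routine.
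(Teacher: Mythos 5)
Your proof is correct and follows essentially the same route as the paper's: extract a one-in-edge spanning subgraph from the skeleton $E_W$, identify it as a disjoint union of out-unicyclic components via the map structure, and read off the $\lambda_i$ on the cycles. You in fact supply more detail than the paper on the key point that the resulting cycles must lie in $\mathcal C$ (the telescoping argument forcing $\Delta=0$ along each cycle and then invoking Assumption \ref{assumption}), which the paper only asserts.
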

\begin{proof}
	Consider a function $W$ as in \eqref{supersol}. Note that the terms of the sum are functions with disjoint support. Since the collection $(\mathfrak o_i)_{i\in I}$ is spanning, any $x\in V$ belongs to exactly one $\mathfrak o_i$ and for each $x\in \mathfrak o_i$ there is exactly one $(y,x)\in \mathfrak o_i$. By the disjoint support property we have $W(x)-W(y)=\omega_{\mathfrak o_i}(x)-\omega_{\mathfrak o_i}(y)=\Delta(y,x)$, so that the supersolution condition is satisfied at each $x\in V$.
	
	\smallskip
	
	Conversely suppose that $W$ is a supersolution.  In the skeleton $E_W$ there is at least one edge entering on each vertex of $V$. For each vertex in $V$ remove arbitrarly from $E_W$ all but one entering edge. As discussed below definition \ref{def:inout}, the resulting graph is a  spanning collection of out-unicyclic components $(\mathfrak o_i)_{i\in I}$. The corresponding cycles can be only the elements $C_i\in\mathcal{C}$ and there are no other cycles, indeed $W(x)=W(y)$ for each $x,y\in C_i$. Define $\lambda_i:=W(x)$, $x\in C_i$, $\forall i\in I$. 
	Then, for vertex $z\in \mathfrak o_i$,  $W(z)=\omega_{\mathfrak o_i}(z)+\lambda_i$ and since the supports are disjoint formula \eqref{supersol} holds.
\end{proof}

\subsection{Freidlin-Wentzell solutions}

Lemma \ref{FWsolemma} states that $\mathscr{FW}$ is a solution of the discrete HJ equation \eqref{1HJ-noabs}. It is also interesting and non-trivial to deduce this fact directly by inserting $\mathscr{FW}$ in \eqref{1HJ}.
\begin{proposition}\label{propFW}
	The $FW$-quasipotential in \eqref{FW solution} is always an element of $\mathcal W_0$, the set of normalized solutions of \eqref{1HJ}.
\end{proposition}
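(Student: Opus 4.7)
The plan is to verify that $\mathscr{FW}$ satisfies the two inequalities packaged in the definition of a solution of \eqref{1HJ-noabs}, and to observe that normalization is automatic from the subtraction of $\inf_{\tau\in\mathcal{T}}\Delta(\tau)$ in \eqref{FW solution}. Throughout, I write $m(x):=\inf_{\tau\in\mathcal{T}_x}\Delta(\tau)$, so that $\mathscr{FW}(x)=m(x)-\min_\tau \Delta(\tau)$ and the shift by a constant is irrelevant for both equations \eqref{1HJ} and \eqref{1HJ-noabs}. The core of the argument is a pair of surgery operations on arborescences that exchange an edge incident to $x$ for a compatible edge incident to a predecessor $y$, echoing the construction one would use to prove, e.g., the matrix–tree representation itself.

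\textbf{Subsolution step.} Fix $(y,x)\in E$ and pick an optimizer $\tau_y\in\mathcal{T}_y$. In $\tau_y$ the vertex $x$ has a unique outgoing edge $e_x=(x,z)$. I claim that
\[
\tau':=\bigl(\tau_y\setminus\{e_x\}\bigr)\cup\{(y,x)\}
\]
lies in $\mathcal{T}_x$. Indeed, every vertex $v\neq x$ still has exactly one outgoing edge ($y$ now has $(y,x)$, every other vertex keeps its edge from $\tau_y$), the total number of edges is $|V|-1$, and the unique $\tau_y$-path from any vertex to $y$ either passes through $x$ (in which case removing $e_x$ truncates it precisely at $x$) or does not (in which case it is prolonged to $x$ by $(y,x)$). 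Since $\Delta(e_x)\geq 0$,
\[
\Delta(\tau')=\Delta(\tau_y)-\Delta(e_x)+\Delta(y,x)\leq m(y)+\Delta(y,x),
\]
so $m(x)\leq m(y)+\Delta(y,x)$ and hence $\mathscr{FW}$ is a subsolution.

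\textbf{Supersolution step.} This is the delicate direction, and is where Assumption \ref{assumption} enters decisively. Let $\tau_x^\ast\in\mathcal{T}_x$ be an optimizer and let $f(x)$ denote the unique vertex with $(x,f(x))\in E_0$, i.e.\ $\Delta(x,f(x))=0$. Decompose $V\setminus\{x\}$ according to which edge of $\tau_x^\ast$ incident to $x$ each vertex uses at the last step of its path to $x$: for every $y$ with $(y,x)\in \tau_x^\ast$, let $A_y$ be the set of vertices whose $\tau_x^\ast$-path to $x$ terminates with $(y,x)$. Each $A_y$ carries an arborescence toward $y$ inherited from $\tau_x^\ast$, and the $A_y$'s partition $V\setminus\{x\}$. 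Since $f(x)\neq x$ (no loops), there exists a unique $y^\ast$ with $f(x)\in A_{y^\ast}$; by construction $(y^\ast,x)\in\tau_x^\ast$. Now perform the reverse surgery
\[
\tau'':=\bigl(\tau_x^\ast\setminus\{(y^\ast,x)\}\bigr)\cup\{(x,f(x))\}.
\]
Because $f(x)\in A_{y^\ast}$, the added edge $(x,f(x))$ enters the sub-arborescence rooted at $y^\ast$, so every vertex still has exactly one outgoing edge except $y^\ast$, and every vertex reaches $y^\ast$: vertices in $A_{y^\ast}$ via the arborescence restricted to $A_{y^\ast}$, and the remaining vertices by first reaching $x$ in $\tau_x^\ast$ and then following $(x,f(x))$ into $A_{y^\ast}$. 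Thus $\tau''\in\mathcal{T}_{y^\ast}$ and, since $\Delta(x,f(x))=0$,
\[
m(y^\ast)\leq \Delta(\tau'')=\Delta(\tau_x^\ast)-\Delta(y^\ast,x)=m(x)-\Delta(y^\ast,x),
\]
so $m(x)\geq m(y^\ast)+\Delta(y^\ast,x)$, which is the missing supersolution inequality at $x$.

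\textbf{Conclusion and obstacle.} Combining the two steps gives the HJ identity \eqref{1HJ-noabs} for $\mathscr{FW}$, and since Assumption \ref{assumption} forces $\min_{y:(x,y)\in E}\Delta(x,y)=0$, this is the same as \eqref{1HJ}. Normalization $\inf_V\mathscr{FW}=0$ is already noted in Lemma \ref{FWsolemma}, so $\mathscr{FW}\in\mathcal{W}_0$. The main obstacle is the supersolution direction: without Assumption \ref{assumption} one cannot count on an edge of weight $0$ out of $x$ to close up $\tau_x^\ast\setminus\{(y^\ast,x)\}$ into an arborescence toward some predecessor without paying extra cost; the clever choice of $y^\ast$ as the predecessor whose $\tau_x^\ast$-subtree contains $f(x)$ is what makes the surgery both admissible and cost-neutral.
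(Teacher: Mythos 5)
Your proof is correct and follows essentially the same route as the paper: the paper packages both edge surgeries through the minimal in-unicyclic graph $\gamma^*=(x,y^*)\cup\tau^*$ (your $\tau''$ is exactly $\gamma^*$ minus the cycle edge entering $x$, and your subsolution swap is the paper's comparison of $(y,x)\cup\tau_y$ with $\gamma^*$), using $\Delta\geq 0$ for the subsolution direction and the zero-cost edge from Assumption \ref{assumption} for the supersolution direction. The only difference is presentational — direct arborescence surgeries versus minimality over $\mathcal{U}^{\mathrm{in}}_x$ — so no substantive comparison is needed.
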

\begin{proof}
	The validity of \eqref{1HJ} is checked for each node. Fix $x\in V$ and let $y^*\in V$ be such that $(x,y^*)\in E_0$ and $\tau^*\in \mathcal T_x$ be the minimal arborescence in \eqref{FW solution}. Then $\Delta(x,y^*)=0$ and $\Delta(\tau^*)$ is minimal in $\mathcal T_x$. 
	
	From the setup,  $\gamma^*:=(x,y^*)\cup \tau^* \in  \mathcal{U}^{\mathrm{in}}_x$ and moreover 
	\begin{equation}\label{fox}
		\Delta(\gamma^*)=\min_{\gamma\in \mathcal{U}^{\mathrm{in}}_x} \Delta(\gamma)=\Delta(\tau^*)\,.
	\end{equation} 
	In fact,  any $\gamma\in \mathcal{U}^{\mathrm{in}}_x$ can be written as $\gamma=(x,y)\cup \tau$ for a suitable $y\in V$ and $\tau\in \mathcal T_x$ and $\Delta(\gamma)=\Delta(x,y)+\Delta (\tau)$. Since both $\Delta(x,y^*)$ and $\Delta (\tau^*)$ are minimal (respectively on $(x,y)\in E$ and $\mathcal T_x$) we  proved \eqref{fox}.
	
	For any $y\in V$ , up to a constant, $\mathscr{FW}(y)=\Delta(\tau)$ for a suitable $\tau\in \mathcal T_y$ and moreover when $(y,x)\in E$ the graph $(y,x)\cup \tau\in \mathcal{U}^{\mathrm{in}}_x$. By \eqref{fox} 
	\begin{equation}\label{fox2}
		\mathscr{FW}(y)+\Delta(y,x)=\Delta((y,x)\cup \tau)\geq \Delta(\gamma^*)=\mathscr{FW}(x)\,,\qquad \forall (y,x)\in E\,,
	\end{equation}
	that means $\mathscr{FW}\in \underline{\mathcal W}$.
	It remains to prove that there exists an $(y',x)\in E$ such that the equality holds in \eqref{fox2}, i.e. $\mathscr{FW}\in \upbar{\mathcal W}$. Consider $(y',x)$ the unique edge entering in $x$ and belonging to the unique cycle in
	$\gamma^*\in \mathcal{U}^{\mathrm{in}}_x$ (recall that $\gamma^*$ is carachterized by \eqref{fox}). 
	
	Since $\gamma^*\setminus (y',x)\in \mathcal T_{y'}$
	\begin{equation}
		\mathscr{FW}(y')+\Delta(y',x)\leq \Delta(\gamma^*\setminus (y',x))+\Delta(y',x)=\Delta(\gamma^*)=\mathscr{FW}(x)\,.
	\end{equation}
	Last inequality  together with the inequality \eqref{fox2}, implies the equality.
\end{proof}

\section{The geometric structure of $\mathcal W$}

In this section, we give some general representations of the set of solutions of the discrete HJ equation and explore their geometric characterizations. 

\subsection{Minimal solutions}

Proposition \ref{propFW} establishes that there exists at least one solution of \eqref{1HJ-noabs}. The following proposition indicates that if there are two or more solutions, they can be combined to form others.
\begin{proposition}\label{prop:infsol}
	Suppose that $\left\{W_j\right\}_{j\in J} \subseteq \mathcal W$ is a family of solutions of the discrete HJ equation \eqref{1HJ-noabs}. Then the function $W_J$, defined as 
	\begin{equation*}
		W_{J}(x):=\inf_{j\in J} \{W_j(x)\},\quad \forall x\in V,
	\end{equation*} 
	is a solution of the discrete HJ equation. 
	If moreover $\left\{W_j\right\}_{j\in J} \subseteq \mathcal{W}_0$ then $W_J\in \mathcal{W}_0$,
\end{proposition}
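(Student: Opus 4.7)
The plan is to verify separately that $W_J$ is a subsolution, a supersolution, and properly normalized. The subsolution inequality is inherited essentially for free: from $W_j(x)\le W_j(y)+\Delta(y,x)$ for every $j\in J$ and every $(y,x)\in E$, passing to the infimum over $j$ on both sides gives $W_J(x)\le W_J(y)+\Delta(y,x)$, so $W_J\in\underline{\mathcal W}$ by Definition \ref{def:inout-comp}.

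The delicate step is the supersolution property. One cannot simply take the infimum of the identities $W_j(x)=W_j(y_j^*)+\Delta(y_j^*,x)$, since both the minimizing in-neighbor $y_j^*$ and the value $W_j(y_j^*)$ depend on $j$ and the infimum in $j$ need not be attained. My plan is to argue by contradiction: assume there is $x\in V$ at which the supersolution equality fails along every in-edge. Because $x$ has only finitely many in-neighbors, this yields a uniform strictly positive gap
\begin{equation*}
\epsilon := \min_{y:(y,x)\in E}\bigl[W_J(y)+\Delta(y,x)-W_J(x)\bigr]>0\,.
\end{equation*}
I then choose $j_0\in J$ with $W_{j_0}(x)<W_J(x)+\epsilon/2$; since $W_{j_0}$ is itself a supersolution, some in-neighbor $y^*$ of $x$ satisfies $W_{j_0}(y^*)+\Delta(y^*,x)=W_{j_0}(x)$. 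Combined with the trivial bound $W_J(y^*)\le W_{j_0}(y^*)$, this gives $W_J(y^*)+\Delta(y^*,x)-W_J(x)<\epsilon/2$, contradicting the definition of $\epsilon$.

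For the normalization, if each $W_j\in\mathcal W_0$ then $W_j\ge 0$ on $V$, hence $W_J\ge 0$, and exchanging the (componentwise) infima
\begin{equation*}
\inf_{x\in V}W_J(x)=\inf_{x\in V}\inf_{j\in J}W_j(x)=\inf_{j\in J}\inf_{x\in V}W_j(x)=0\,.
\end{equation*}
Finiteness of $W_J$ is clear since $W_J(x)\le W_{j_0}(x)<\infty$ for any fixed $j_0\in J$.

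The main obstacle is the supersolution step: taking an infimum generally destroys the equality structure encoded in the skeleton $E_{W_j}$, and the skeleton edges may vary wildly with $j$. The key saving feature is the finiteness of the in-neighborhood of $x$, which turns the violation of equality at every edge into a uniform, strictly positive gap $\epsilon$; this gap is what fuels the contradiction argument above.
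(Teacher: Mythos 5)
Your proof is correct. The paper's own argument is a single chain of equalities: it treats the solution property as the fixed-point equation $W(x)=\min_{y:(y,x)\in E}\{W(y)+\Delta(y,x)\}$ and simply exchanges the finite minimum over in-neighbors with the infimum over $j\in J$, which immediately gives $W_J(x)=\min_{y:(y,x)\in E}\{W_J(y)+\Delta(y,x)\}$. You instead split the claim into the subsolution and supersolution halves of Definition \ref{def:inout-comp} and prove the supersolution part by contradiction via a uniform gap $\epsilon>0$. The two arguments rest on exactly the same structural fact --- the in-neighborhood of $x$ is finite, so $\min_y\inf_j=\inf_j\min_y$ --- and your $\epsilon$-argument is essentially an unpacking of the nontrivial inequality in that exchange. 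What your version buys is transparency about where finiteness enters and an explicit treatment of the fact that the minimizing in-neighbor $y_j^*$ may vary with $j$ and the infimum need not be attained; what the paper's version buys is brevity. One small caveat common to both proofs: for an infinite family $J$ not contained in $\mathcal W_0$ the infimum could a priori be $-\infty$ at some vertex, and both arguments implicitly assume $W_J$ is real-valued; your normalization step correctly disposes of this in the $\mathcal W_0$ case.
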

\begin{proof}
	For all $x\in V$:
	\begin{align*}
		\min_{y:(y,x)\in E}\left\{W_J(y)+\Delta(y,x)\right\}&= \min_{y:(y,x)\in E}\left\{\inf_{j\in J}\Big\{W_j(y)\Big\}+\Delta(y,x)\right\} \\
		&= \min_{y:(y,x)\in E}\left\{\inf_{j\in J}\Big\{W_j(y)+\Delta(y,x)\Big\}\right\}\\
		&= \inf_{j\in J}\left\{\min_{y:(y,x)\in E}\Big\{W_j(y)+\Delta(y,x)\Big\}\right\}\\
		&= \inf_{j\in J}\Big\{W_j(x)\Big\}\\
		&= W_J(x).
	\end{align*}
	The forth equality is obtained using that $W_j$ is a solution for each $j\in J$.
	The second statement follows by the fact that $0\leq \inf_x W_J(x)\leq \inf_x W_j(x)=0$, so that $W_J\in \mathcal{W}_0$.
\end{proof}

\begin{corollary}
	The discrete HJ equation \eqref{1HJ-noabs} admits, among the normalized solutions, a minimal one, defined as 
	\begin{equation}
		W_{m}:=\inf \left\{W: W\in \mathcal{W}_0\right\}.
	\end{equation}
\end{corollary}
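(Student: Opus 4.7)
The plan is to reduce the corollary to Proposition \ref{prop:infsol} applied to the entire family $\mathcal{W}_0$. The two things that need to be checked are that $\mathcal{W}_0$ is nonempty and that the pointwise infimum $W_m(x) := \inf\{W(x) : W \in \mathcal{W}_0\}$ is a finite real number for every $x \in V$, so that the hypothesis of Proposition \ref{prop:infsol} applies with the index set $J = \mathcal{W}_0$.

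For nonemptiness I would invoke Proposition \ref{propFW}, which says that $\mathscr{FW} \in \mathcal{W}_0$. For finiteness I would note that any $W \in \mathcal{W}_0$ is in particular a subsolution, hence by Lemma \ref{lometto} belongs to $\mathrm{Lip}_1$, so $W(x) - W(y) \leq d(y,x)$ for all $x, y \in V$. Fixing some $y^* \in V$ such that $W(y^*) = 0$ (which exists because $\inf_{x \in V} W = 0$ and $V$ is finite), one obtains
\begin{equation*}
0 \leq W(x) \leq d(y^*,x) \leq \max_{z \in V} d(z,x) < +\infty\,.
\end{equation*}
The upper bound depends only on the graph and on $\Delta$, not on the particular $W \in \mathcal{W}_0$. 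Hence $W_m(x)$ is a real number in $[0,\max_{z\in V} d(z,x)]$ for every $x$.

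Having established these two preliminaries, the conclusion follows directly from Proposition \ref{prop:infsol}: taking $J = \mathcal{W}_0$ and $W_j = j$, the proposition guarantees that $W_m \in \mathcal W$, and the second statement of that proposition gives $W_m \in \mathcal{W}_0$. By definition, $W_m(x) \leq W(x)$ for every $x \in V$ and every $W \in \mathcal{W}_0$, so $W_m$ is the minimal element of $\mathcal{W}_0$.

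The only conceptual subtlety is that Proposition \ref{prop:infsol} is stated for an arbitrary family $\{W_j\}_{j \in J}$, so I expect no real obstacle: the inf/min exchange in its proof uses only that the minimum over incoming edges is over a finite set, which is unaffected by the cardinality of $J$. Thus the main step is simply the uniform-boundedness observation above, which ensures the pointwise infimum is well defined before we apply the preceding proposition.
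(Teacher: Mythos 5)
Your proposal is correct and follows essentially the same route as the paper, which likewise obtains the corollary by combining Proposition \ref{propFW} (nonemptiness of $\mathcal{W}_0$) with Proposition \ref{prop:infsol} applied to the whole family $\mathcal{W}_0$. The only difference is that you explicitly verify the pointwise infimum is finite via the $\mathrm{Lip}_1$ bound, a well-definedness check the paper leaves implicit.
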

\begin{proof}
	By Proposition \ref{propFW}, $W_m \in \mathcal{W}_0$ and, by definition, $W_m\leq W$ for any $W\in \mathcal{W}_0$.
\end{proof}

Proposition \ref{prop:infsol} can be easily extended to the solutions of the general discrete HJ equation \eqref{1HJ}.

\subsection{Quasipotentials}

For each subset $C$ of $V$, the distance between set $C$  and a node $x$ defines a function that measures the minimum cost to move from a node in $C$ to the node $x$. If  $C$ is a cycle in $\mathcal{C}$  this function is a normalized solution to the discrete HJ equation.

\begin{definition}
	For each cycle $C\in\mathcal{C}$ of $(V,E_0)$, define the corresponding \emph{quasipotential} based on $C$ as
	\begin{equation}\label{quasiM}
		W_{C}(x):= d(C,x)\,, \qquad x\in V\,.
	\end{equation}
\end{definition}

Observe that, by definition, $W_{C}(x)=0$ for any $x\in C$ and $W_{C}(x)>0$ for any $x\not \in C$. \\

Fix $C\in\mathcal{C}$ and consider the edges $\bar \gamma_{C}:=\cup_{\left\{x\in V\cap C^c\right\}}\bar \gamma_{C,x}$.  Call $(V, \bar\gamma^*_{C})$, the directed graph obtained removing arbitrarily from  $\bar \gamma_{C}$, for each node $x\not\in C$ all the entering edges except one and finally adding the edges of the cycle $C$.

\begin{lemma}\label{lemmaC}
	The directed graph $(V, \bar\gamma^*_{C})=\mathfrak o$ is a spanning out-unicyclic graph with unique cycle $C$. Moreover, $W_C(x)=\omega_{\mathfrak o}(x)$.
\end{lemma}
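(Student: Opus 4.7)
My plan is to verify the lemma in two stages, corresponding to its two assertions: first the combinatorial statement that $\mathfrak{o} := (V, \bar\gamma^*_C)$ is a spanning out-unicyclic graph with unique cycle $C$, and then the identity $W_C = \omega_{\mathfrak{o}}$. The structural claim will follow from an in-degree count combined with a cycle-exclusion argument based on the geodesic property of the kept edges; the identity will then be immediate from the fact that $W_C$ and $\omega_{\mathfrak{o}}$ satisfy the same one-step recursion on $\mathfrak{o}$ with the same initial condition.

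For the structural part, I would first check that every vertex has exactly one entering edge in $\bar\gamma^*_C$: each $x \notin C$ by construction, and each $y \in C$ via the added cycle edge. Here I will use that the geodesics $\bar\gamma_{C,x}$ can be chosen to meet $C$ only at their starting vertex (otherwise the suffix from the last visit to $C$ would be an equally short subpath), so $\bar\gamma_C$ contributes no edges entering vertices of $C$. Reversing all edges then turns $\bar\gamma^*_C$ into a map, hence it is a disjoint union of out-unicyclic components, one of which is $C$ itself. To see that $C$ is the \emph{only} cycle, I would use the key property that each kept entering edge $(z,x)$ lies on some geodesic $\bar\gamma_{C,y}$, so by the ``prefix-of-geodesic-is-geodesic'' principle, $d(C,x) = d(C,z) + \Delta(z,x)$. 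Consequently, following entering edges backward from any $x \notin C$ yields a sequence along which $d(C,\cdot)$ is non-increasing, and since $d(C,y) = 0$ forces $y \in C$ (as $C$ is a whole zero-weight connected component of the map $(V,E_0)$ by Assumption~\ref{assumption}), the backward walk must reach $C$---provided the ``arbitrary'' selection of entering edges is made so as not to close a zero-weight loop outside $C$.

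For the identity, observe that $\omega_{\mathfrak{o}}$ is determined on $V$ by $\omega_{\mathfrak{o}}|_C \equiv 0$ together with the recursion $\omega_{\mathfrak{o}}(x) = \omega_{\mathfrak{o}}(z) + \Delta(z,x)$ along the unique entering edge $(z,x) \in \mathfrak{o}$. The distance function $W_C = d(C,\cdot)$ vanishes on $C$ and, by the geodesic identity above, satisfies the same recursion on $\mathfrak{o}$; uniqueness of the recursive extension along the unique $C$-to-$x$ path in $\mathfrak{o}$ thus forces $W_C \equiv \omega_{\mathfrak{o}}$. The main obstacle is the cycle-exclusion step in the structural part: under a literally arbitrary choice of kept entering edges, a zero-weight loop lying outside $C$ might in principle survive, producing a spurious second cycle. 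The technical crux is thus to articulate a compatible selection rule---essentially a Dijkstra-style tie-breaking that processes the cycles in $\mathcal{C} \setminus \{C\}$ reachable from $C$ in order of increasing distance and orients each one's entry edge from outside the cycle---which shows that the bad configuration can always be avoided and the claimed out-unicyclic structure realized.
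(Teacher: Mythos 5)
Your proposal is correct and follows the same basic route as the paper's proof, which is extremely terse (it asserts in one sentence that the geodesic construction yields a unique cycle and in-degree one, and that the identity $W_C=\omega_{\mathfrak o}$ ``follows by definition''). What you add is exactly the detail the paper suppresses: the telescoping identity $d(C,x)=d(C,z)+\Delta(z,x)$ along each kept edge (via the prefix-of-a-geodesic argument), the in-degree count plus edge reversal to get the map structure, and the observation that the two functions satisfy the same recursion along the unique $C$-to-$x$ path. More importantly, the two caveats you raise are genuine and not addressed in the paper. First, the geodesics $\bar\gamma_{C,x}$ must indeed be chosen to meet $C$ only at their starting vertex, otherwise $\bar\gamma_C$ can contribute spurious edges entering $C$ which the construction never removes. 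Second, a literally arbitrary pruning can fail: under Assumption \ref{assumption} every other cycle $C'\in\mathcal C\setminus\{C\}$ consists of zero-weight edges, all its vertices are equidistant from $C$, and each cycle edge $(z_{i-1},z_i)$ of $C'$ is the last edge of some geodesic from $C$ to $z_i$ (enter $C'$ elsewhere and walk around); keeping all of these leaves $C'$ as a second cycle. So the lemma is only true for a suitable selection, which is what your Dijkstra-style rule supplies.

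Your sketched repair does close the gap, and can be made fully rigorous with two observations. Since $(V,E_0)$ is a map and the unique zero-weight edge leaving any vertex of $C$ is its cycle edge, one has $d(C,x)>0$ for every $x\notin C$; combined with the identity $d(C,x)=d(C,z)+\Delta(z,x)$ on kept edges, any spurious cycle in the pruned graph would have total weight zero and hence would have to be an element of $\mathcal C\setminus\{C\}$ --- these are the \emph{only} candidates. For each such $C'$, take any geodesic from $C$ to $C'$ and let $z'$ be its first vertex in $C'$; its last edge enters $z'$ from outside $C'$ and is geodesic-compatible, so selecting it breaks $C'$. With every $C'$ broken and in-degree one everywhere, the reversed graph is a map whose unique cycle is $C$, forcing a single out-unicyclic component, and the identity $W_C=\omega_{\mathfrak o}$ then follows by telescoping as you say.
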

\begin{proof}
	Since it is constructed using geodetic, the directed graph $(V, \bar\gamma^*_{C})$ has a unique cycle that is $C$ 
	and, for each vertex $x\in V$ outside the cycle, there is exactly one edge entering to $x$, and this means that it is a spanning out-unicyclic component. The equality $W_C(x)=\omega_{\mathfrak o}(x)$ follows by definition.
\end{proof}

\begin{proposition}
	For each $C\in\mathcal{C}$ the \emph{quasipotential} $W_{C}$ based on $C$ belongs to $\mathcal{W}_0$.
\end{proposition}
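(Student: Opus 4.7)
The plan is to verify the three defining conditions of $\mathcal{W}_0$ for $W_C$ separately: the subsolution inequality, the supersolution equality at every vertex, and the normalization $\inf W_C = 0$. The normalization is immediate from $W_C(x) = d(C,x) = 0$ for any $x \in C$ (point inequality of the pseudo quasimetric) together with $d \geq 0$.

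For the subsolution property, I would rely on Lemma \ref{lometto} and argue that $W_C \in \mathrm{Lip}_1$. This is a direct consequence of the triangle inequality of the pseudo quasimetric $d$: for any $x,y \in V$,
\begin{equation*}
W_C(x) = d(C,x) \leq d(C,y) + d(y,x) \leq W_C(y) + d(y,x),
\end{equation*}
which gives \eqref{lipW}. Specializing to $(y,x) \in E$ and using $d(y,x) \leq \Delta(y,x)$ (a single edge is a path from $y$ to $x$), one recovers the subsolution inequality \eqref{subsol} directly without invoking Lemma \ref{lometto}.

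For the supersolution property the cleanest route is through Lemma \ref{lemmaC}, which provides a spanning out-unicyclic graph $\mathfrak{o} = (V,\bar\gamma^*_C)$ whose unique cycle is $C \in \mathcal C$ and satisfies $W_C = \omega_{\mathfrak{o}}$. Regarded as a (trivial) spanning collection of out-unicyclic components with index set of size one, and choosing the free parameter $\lambda_{\mathfrak o} = 0$, the function $W_C = \omega_{\mathfrak o} + 0\cdot\id_{\mathfrak o}$ has exactly the form \eqref{supersol}, so Proposition \ref{prop:super} yields $W_C \in \upbar{\mathcal W}$.

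The only delicate case (and the place one might worry about) is the supersolution condition at vertices $x \in C$, where $W_C(x)=0$ forces any witness $y^*$ to satisfy $W_C(y^*) = \Delta(y^*,x) = 0$. This is exactly where Assumption \ref{assumption} is used: taking $y^*$ to be the $C$-predecessor of $x$, the edge $(y^*,x)$ belongs to $E_0$ (since $C$ is a cycle of the map $(V,E_0)$), so $\Delta(y^*,x)=0$, while $y^*\in C$ gives $W_C(y^*)=0$. For $x \notin C$, the witness is the last edge $(y^*,x)$ of the geodesic $\bar\gamma_{C,x}$: by construction $W_C(x) = \Delta(\bar\gamma_{C,y^*}) + \Delta(y^*,x) \geq W_C(y^*) + \Delta(y^*,x)$, and the reverse inequality is the subsolution part, giving equality.
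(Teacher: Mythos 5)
Your proposal is correct and follows essentially the same route as the paper: the supersolution part via Lemma \ref{lemmaC} and Proposition \ref{prop:super}, the normalization from $W_C\geq 0$ and $W_C\equiv 0$ on $C$, and the subsolution part by comparing $d(C,x)$ with the path through $y$ — your triangle-inequality phrasing is just the direct form of the paper's concatenation-of-geodesics contradiction. The extra verification at vertices of $C$ via Assumption \ref{assumption} is a correct (if redundant) supplement to what Proposition \ref{prop:super} already delivers.
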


\begin{proof}
	By Proposition \ref{prop:super} and Lemma \ref{lemmaC}, $W_C$ is a supersolution.
	For the subsolution condition, suppose that there exists $(y,x)\in E$ such that $W_{C}(x)>W_{C}(y)+ \Delta(y,x)$. By definition of $W_C$, if at least one among $x,y$ belongs to $C$, the previous inequality is impossible; we assume therefore that $x,y\not\in C$. Define the path $\tilde{\gamma}_{C,x}=\bar{\gamma}_{C,y}\cup(y,x)$. Then it follows that \[d(C,x)=\Delta(\bar{\gamma}_{C,x})=W_C(x)>W_{C}(y)+ \Delta(y,x)=\Delta\big(\tilde{\gamma}_{C,x}\big)\,,\] 
	which is impossible since $\bar{\gamma}_{C,x}$ is a geodetic from $C$ to $x$. The normalization follows since $W_C\geq 0$ and $W_C(x)=0$ for any $x\in C$.
\end{proof}

This is a special situation of a more general case when, given a collection of spanning out-unicyclic components $(\mathfrak o_i)_{i\in I}$, for each $x\in \mathfrak o_i$ the unique path from  the cycle $C_i$ to $x$ is a geodetic path $\upbar \gamma_{C_i,x}$. An out-unicyclic component of this type is called \emph{geodetic out-unicyclic component}.\\

The following simple lemma will be useful for the characterization.
\begin{lemma}
	Take $V\in\mathcal{W}$. The associated family of spanning out-unicyclic components $(\mathfrak o_i)_{i\in I}$ in Proposition \ref{prop:super} is composed by geodetic out-unicyclic components.
\end{lemma}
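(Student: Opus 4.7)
The plan is to extract the geodetic property by playing the two sides of the viscosity notion against each other: the supersolution representation \eqref{supersol} forces $\omega_{\mathfrak o_i}(x)$ to equal the $\Delta$-length of the unique path inside $\mathfrak o_i$ from the cycle $C_i$ to $x$, while the subsolution inequality from Lemma \ref{lometto} forces $\omega_{\mathfrak o_i}(x)$ to be bounded above by $d(C_i,x)$. Since $d(C_i,x)$ is by definition an infimum over all directed paths, these two estimates will pin $\omega_{\mathfrak o_i}(x)$ to $d(C_i,x)$ and therefore show that the path inside $\mathfrak o_i$ is geodetic.

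Concretely, first I would fix $i\in I$ and $x\in\mathfrak o_i$ outside the cycle $C_i$ (the case $x\in C_i$ is trivial because both sides vanish), and call $\sigma_x$ the unique directed path inside $\mathfrak o_i$ going from $C_i$ to $x$, whose existence and uniqueness are guaranteed by the definition of out-unicyclic component. Iterating the defining identity $\omega_{\mathfrak o_i}(a)-\omega_{\mathfrak o_i}(b)=\Delta(b,a)$ along $\sigma_x$ and using $\omega_{\mathfrak o_i}\equiv 0$ on $C_i$ gives
\begin{equation*}
\omega_{\mathfrak o_i}(x)=\Delta(\sigma_x)\geq d(C_i,x),
\end{equation*}
the last inequality simply because $\sigma_x$ is an admissible competitor in the infimum \eqref{defd} defining $d(C_i,x)$.

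For the reverse inequality I would use $W\in\mathcal W\subseteq\underline{\mathcal W}=\mathrm{Lip}_1$. Pick any $y\in C_i$; from the disjointness of supports in the representation \eqref{supersol} and from $\omega_{\mathfrak o_i}|_{C_i}=0$, one reads $W(y)=\lambda_i$ and $W(x)=\omega_{\mathfrak o_i}(x)+\lambda_i$. The Lipschitz inequality \eqref{lipW} then reduces to
\begin{equation*}
\omega_{\mathfrak o_i}(x)=W(x)-W(y)\leq d(y,x),
\end{equation*}
and taking the infimum over $y\in C_i$ yields $\omega_{\mathfrak o_i}(x)\leq d(C_i,x)$. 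Combining with the previous step forces $\Delta(\sigma_x)=\omega_{\mathfrak o_i}(x)=d(C_i,x)$, so $\sigma_x$ is a geodetic path from $C_i$ to $x$, which is exactly the definition of a geodetic out-unicyclic component.

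The argument is essentially a direct comparison, with no serious obstacle; the only delicate point is justifying the identity $W(y)=\lambda_i$ for $y$ on the cycle $C_i$, which requires invoking both the vanishing of $\omega_{\mathfrak o_i}$ on its own cycle and the spanning/disjointness of the collection $(\mathfrak o_j)_{j\in I}$, so that no other summand in \eqref{supersol} contributes at $y$. Once that is in place, the two inequalities close up on themselves and deliver the lemma.
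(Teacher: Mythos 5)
Your proof is correct and follows essentially the same route as the paper's: the paper argues by contradiction that a non-geodetic path in $\mathfrak o_i$ would force $W(x)-W(y)>d(C_i,x)=d(y,x)$ for $y\in C_i$, violating the Lipschitz characterization of subsolutions from Lemma \ref{lometto}, which is exactly the contrapositive of your two-sided comparison. Your direct version is slightly more detailed (in particular in justifying $W(y)=\lambda_i$ on the cycle and in noting that $\Delta(\sigma_x)\geq d(C_i,x)$ because $\sigma_x$ is an admissible competitor), but the underlying idea is identical.
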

\begin{proof}
	Assume by contraddiction that $\mathfrak o_i$ is not geodetic. There exists an $x\in \mathfrak o_i\cap C_i^c$ whose path in $\mathfrak o_i$ from $C_i$ is not the geodetic one. This implies that $V(x)-V(y)>d(C_i,x)$, with $y\in C_i$, and by Lemma \ref{lometto}, $V$ is not a subsolution.
\end{proof}

The following theorem gives a complete characterization of solutions of \eqref{1HJ-noabs} in terms of the parameters $\lambda=(\lambda_i)_{i\in \mathcal C}\in \mathbb R^{|\mathcal{C}|}$.
\begin{theorem}\label{teo:char_alpha}
	The set $\mathcal W$ of the solutions to the discrete HJ equation \eqref{1HJ-noabs} coincides with the family of functions 
	\begin{equation}\label{eq:alphaW}
		W_\lambda=\min_{i\in\mathcal{C}}\Big\{W_{C_i}+\lambda_i\Big\}\,, 
	\end{equation}
	where the vector $\lambda=(\lambda_i)_{i\in\mathcal{C}}$ belongs to the polyhedron $\mathrm{Lip}_1^{\mathcal{C}}$ in $\mathbb{R}^{|\mathcal{C}|}$ defined by
	\begin{equation}\label{ineq:lambda}
		\lambda_i-\lambda_j\leq d(C_j,C_i)\,.
	\end{equation}
\end{theorem}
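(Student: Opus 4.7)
The plan is to prove the two set inclusions separately, exploiting all the structural results established earlier in the section.

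For the inclusion $\{W_\lambda:\lambda\in\mathrm{Lip}_1^{\mathcal{C}}\}\subseteq\mathcal{W}$, I would simply observe that each quasipotential $W_{C_i}$ is a solution of \eqref{1HJ-noabs} by the previous proposition, that adding the constant $\lambda_i$ preserves the solution property (equation \eqref{1HJ-noabs} is invariant under constants), and then apply Proposition \ref{prop:infsol} to conclude that $W_\lambda=\min_{i\in\mathcal{C}}\{W_{C_i}+\lambda_i\}$ is a solution. This direction does not actually require the constraints \eqref{ineq:lambda}; those constraints are instead what make the parametrization $\lambda\mapsto W_\lambda$ a canonical description of $\mathcal{W}$, since they are exactly equivalent to $W_\lambda(x)=\lambda_i$ for $x\in C_i$.

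For the reverse inclusion, take an arbitrary $W\in\mathcal{W}$ and produce the parameter $\lambda$ directly. Since $W$ is Lipschitz by Lemma \ref{lometto} and $d(x,y)=0$ for $x,y$ in the same cycle of $(V,E_0)$, the function $W$ is constant on each cycle $C_i\in\mathcal{C}$; set $\lambda_i:=W\big|_{C_i}$. The constraints \eqref{ineq:lambda} follow at once: for $x\in C_j$ and $y\in C_i$ the subsolution inequality gives $\lambda_i-\lambda_j=W(y)-W(x)\leq d(x,y)$, and taking the infimum over $x\in C_j$, $y\in C_i$ produces $\lambda_i-\lambda_j\leq d(C_j,C_i)$.

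It remains to identify $W$ with $W_\lambda$. Proposition \ref{prop:super} expresses $W$ via a spanning family of out-unicyclic components $(\mathfrak{o}_i)_{i\in I}$ with additive constants which, by construction, must coincide with the $\lambda_i$ just defined; moreover the lemma preceding the theorem forces these components to be geodetic, so $\omega_{\mathfrak{o}_i}(x)=d(C_i,x)=W_{C_i}(x)$ for every $x\in\mathfrak{o}_i$. Consequently $W(x)=W_{C_i}(x)+\lambda_i$ whenever $x\in\mathfrak{o}_i$. To upgrade this pointwise identity into the minimum formula, I would use once more the Lipschitz property: for any $j\in\mathcal{C}$ and any $y\in C_j$, $W(x)\leq W(y)+d(y,x)=\lambda_j+d(y,x)$, which upon minimizing over $y\in C_j$ gives $W(x)\leq \lambda_j+W_{C_j}(x)$ for every $j$. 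Combined with the equality already proved for the index $i$ with $x\in\mathfrak{o}_i$, this shows $W(x)=\min_{j\in\mathcal{C}}\{W_{C_j}(x)+\lambda_j\}=W_\lambda(x)$.

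The main conceptual point—though not a real obstacle given the preceding machinery—is the last step, namely recognizing that the candidate value $W_{C_i}(x)+\lambda_i$ on $\mathfrak{o}_i$ is automatically the minimum over $j$. This is forced by the Lipschitz/subsolution property of $W$, which connects the values on the cycles $C_j$ to the value at $x$ through geodesics. Once this is in place, the theorem follows by combining the two inclusions.
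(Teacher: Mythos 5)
Your proof is correct and follows essentially the same route as the paper's: one inclusion via Proposition \ref{prop:infsol}, the other via the supersolution decomposition of Proposition \ref{prop:super} together with the geodetic lemma, with the constraints \eqref{ineq:lambda} and the final minimum identity both extracted from the Lipschitz/subsolution property exactly as in the paper. Your added remark that \eqref{ineq:lambda} is not needed for the first inclusion but serves to make the parametrization canonical is a small, accurate refinement of what the paper states.
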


\begin{proof} 
	For any $i\in\mathcal{C}$, $W_{C_i}+\lambda_i$ is a solution then, by Proposition \ref{prop:infsol}, $W_\lambda$ it is also a solution for any $\lambda\in\mathbb{R}^{|\mathcal{C}|}$. It remains to show that every solution can be written as in formula \eqref{eq:alphaW} with $\lambda$ statisfying \eqref{ineq:lambda}.\\
	
	Take a solution $W\in\mathcal{W}$. Since $W$ is both a supersolution and subsolution, it can be written as \eqref{supersol} with $(\mathfrak{o}_i)_{i\in I}$ spanning geodetic out-unicyclic components. Since any solution has to be constant on each cycle in $\mathcal C$, adding all the edges of the cycles in $\mathcal C$ we can assume that the set of labels $I$ of the components concides with $\mathcal C$. We call $\lambda_i = W (x)$, where $x\in C_i$. Inequality \eqref{ineq:lambda} is then satisfied because $W$ is in $\mathrm{Lip}_1$:
	\begin{equation*}
		\lambda_i-\lambda_j=W(x)-W(y)\leq d(y,x)=d(C_j,C_i),
	\end{equation*}
	where $x\in C_i$ and $y\in C_j$. By definition we have $W(x)=W_{C_i}(x)+\lambda_i$, for any $x\in\mathfrak{o}_i$. The final step in the proof is to show that,  for any $x\in\mathfrak{o}_i$ and $j\neq i$, it holds:
	$W_{C_i}(x)+\lambda_i\leq W_{C_j}(x)+\lambda_j$. Take $y\in C_j$. Since $W$ is subsolution, $W(x)-W(y)\leq d(C_j,x)$ that is equivalent to $W(x)\leq \lambda_j + W_{C_j}(x)$.

\end{proof}
Observe now that the function $d$ induces a discrete quasimetric among the cycles $\mathcal{C}$ in $(V,E_0)$ defined by $d(C,C')$, for any $C,C'\in\mathcal{C}$.  
Under the Assumption \ref{assumption}, the distance $d(C,C')>0$, for any $C\neq C'$. Condition \eqref{ineq:lambda} says that the parameters $(\lambda_i)_{i\in \mathcal C}$ coincide with the set $\mathrm{Lip}_1^{\mathcal C}$ of the one-Lipschitz functions on $\mathcal C$ with respect to the quasimetric $d$. 

\smallskip

The $\mathscr{FW}$ solution can be identified as a special solution of the form \eqref{eq:alphaW}. Let $\mathscr{FW}^{\mathcal C}$ be the minimal weight of arborescences on the complete graph with vertices $\mathcal C$ and weights defined by $d$. We have the following representation of the FW-quasipotential $\mathscr{FW}$.
\begin{proposition}
	The solution $\mathscr{FW}$ in \eqref{FW solution} corresponds to a solution as in \eqref{eq:alphaW} with $\lambda_i=\mathscr{FW}^{\mathcal C}(C_i)$, i.e. we have
	\begin{equation}\label{minfw}  
		\mathscr{FW}(x) = \min_{C\in\mathcal C}\Big\{W_{C}(x)+\mathscr{FW}^{\mathcal C}(C)\Big\}.
	\end{equation}
\end{proposition}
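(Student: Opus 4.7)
By Proposition \ref{propFW}, $\mathscr{FW}\in\mathcal W_0$, so Theorem \ref{teo:char_alpha} provides $\lambda=(\lambda_i)_{i\in\mathcal C}\in\mathrm{Lip}_1^{\mathcal C}$ such that $\mathscr{FW}(x)=\min_{i\in\mathcal C}\{W_{C_i}(x)+\lambda_i\}$. The plan is to identify $\lambda_i=\mathscr{FW}^{\mathcal C}(C_i)$ by first evaluating $\mathscr{FW}$ on cycle vertices and then reducing the minimum arborescence problem on $V$ to the corresponding problem on the macro-graph $\mathcal G^{\mathcal C}$ (the complete directed graph on the vertex set $\mathcal C$ with weights $d$).

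\smallskip

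\noindent\textbf{Step 1: evaluation on cycles.} For $y\in C_i$, since $d$ vanishes along each cycle one has $W_{C_i}(y)=0$ and $W_{C_j}(y)=d(C_j,C_i)$ for $j\neq i$. The Lipschitz bound $\lambda_i-\lambda_j\leq d(C_j,C_i)$ then gives $\lambda_i\leq\lambda_j+W_{C_j}(y)$, so the minimum in the representation is attained at $j=i$, yielding $\mathscr{FW}(y)=\lambda_i$. In view of the combinatorial formula \eqref{FW solution}, the proof reduces to showing that, for every $i$,
\begin{equation*}
A_i:=\inf_{\tau\in\mathcal T_y}\Delta(\tau)\;=\;A_i^{\mathcal C}:=\inf_{\bar\tau\in\mathcal T^{\mathcal C}_{C_i}}d(\bar\tau),
\end{equation*}
where $\mathcal T^{\mathcal C}_{C_i}$ denotes arborescences on $\mathcal G^{\mathcal C}$ rooted at $C_i$; the left-hand side is independent of $y\in C_i$ because cycle edges are weightless, and the identities $\inf_{\tau}\Delta(\tau)=\min_k A_k$ and $\inf_{\bar\tau}d(\bar\tau)=\min_k A_k^{\mathcal C}$ then give $\mathscr{FW}(y)=A_i-\min_k A_k=A_i^{\mathcal C}-\min_k A_k^{\mathcal C}=\mathscr{FW}^{\mathcal C}(C_i)$.

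\smallskip

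\noindent\textbf{Step 2: the identity $A_i=A_i^{\mathcal C}$.} For the $\geq$ direction, fix a minimum arborescence $\tau\in\mathcal T_y$ (which may be assumed to carry exactly one non-$E_0$ outgoing edge from each non-root cycle $C_k$, at a vertex $u_k$, since any additional such edge within a basin could be replaced by its $E_0$ counterpart without cost); let $\pi(k)$ be the first cycle reached by the $\tau$-path starting at $u_k$. The map $\pi$ defines $\bar\tau\in\mathcal T^{\mathcal C}_{C_i}$; the non-$E_0$ edges of $\tau$ partition into ``portions'' realizing the macro-edges $(C_k,C_{\pi(k)})$, each portion being a path in $V$ from $C_k$ to $C_{\pi(k)}$ of $\Delta$-cost at least $d(C_k,C_{\pi(k)})$ by definition of $d$. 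Summing (with $E_0$-edges contributing zero) yields $\Delta(\tau)\geq d(\bar\tau)\geq A_i^{\mathcal C}$. For the reverse inequality, take an optimal $\bar\tau^*\in\mathcal T^{\mathcal C}_{C_i}$, select a geodetic $\gamma_{k,k'}$ in $V$ of length $d(C_k,C_{k'})$ for each macro-edge $(C_k,C_{k'})\in\bar\tau^*$, and set $\tau(v)$ to be the next edge along some $\gamma_{k,k'}$ whenever $v$ lies on one, and the $E_0$-outgoing edge $e_0(v)$ otherwise; the resulting spanning subgraph has total weight $\sum_{(C_k,C_{k'})\in\bar\tau^*} d(C_k,C_{k'}) = A_i^{\mathcal C}$.

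\smallskip

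\noindent\textbf{Main obstacle.} The principal difficulty is verifying that the construction in the $\leq$ direction produces an actual arborescence: distinct geodetic paths $\gamma_{k,k'}$ may traverse a common intermediate vertex $w\in B_m$, which would force two conflicting outgoing edges at $w$. This is handled via the triangle inequality for the quasimetric $d$: any such conflict at $w\in B_m$ can be repaired by rerouting one of the involved macro-edges through the cycle $C_m$ at no additional cost, exploiting the optimality of $\bar\tau^*$ together with the equality $d(C_k,C_{k'})=d(C_k,C_m)+d(C_m,C_{k'})$ that must hold along the shared geodetic segment. Iterating these local rewirings yields a conflict-free choice of geodetics and hence a valid $\tau\in\mathcal T_y$ of cost $A_i^{\mathcal C}$, closing the argument.
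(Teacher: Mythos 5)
Your high-level route is the same as the paper's: both reduce, via Theorem \ref{teo:char_alpha}, to checking that the two sides of \eqref{minfw} agree on the cycle vertices, which amounts to your identity $A_i=A_i^{\mathcal C}$ between the minimal-arborescence problem on $\mathcal G$ rooted in $C_i$ and the one on the macro-graph $(\mathcal C,d)$. The paper essentially only asserts this identity (``up to a constant, the right hand side coincides with $\mathscr{FW}$ on the cycles''), so your Step 2 is where all the real content lies --- and it is exactly there that your argument has gaps.

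In the $\geq$ direction, the claim that the non-$E_0$ edges of a minimal $\tau$ ``partition into portions realizing the macro-edges'' fails in general: in an arborescence two root-directed paths that meet at a vertex coincide from then on, so the escape paths issued from two distinct cycles $C_k,C_l$ may merge at a non-cycle vertex $w$ \emph{before} reaching any cycle; the shared tail is then counted twice in $\sum_k\Delta(\mathrm{portion}_k)$ and the inequality $\Delta(\tau)\geq d(\bar\tau)$ no longer follows. (The merging is harmless only when the shared tail consists of $E_0$ edges and hence costs zero --- but that is precisely the normalization you assume, and your justification for it, replacing any extra non-$E_0$ edge by its $E_0$ counterpart ``without cost,'' ignores that such a replacement can create a cycle and destroy the arborescence property.) In the $\leq$ direction, your repair rests on the identity $d(C_k,C_{k'})=d(C_k,C_m)+d(C_m,C_{k'})$ for a geodetic through a point $w$ of the basin of $C_m$; only half of it is automatic ($d(C_k,C_m)\le d(C_k,w)$, because $d(w,C_m)=0$), while $d(C_m,C_{k'})\le d(w,C_{k'})$ would require $d(C_m,w)=0$, which is false in general. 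This direction, however, admits a clean fix that avoids conflicts altogether: set $H:=E_0\cup\bigcup_{(C_k,C_{k'})\in\bar\tau^*}\bar\gamma_{C_k,C_{k'}}$; in $H$ every vertex has a directed path to $y$, so $H$ contains a spanning arborescence rooted at $y$ of cost at most $\Delta(H)\le d(\bar\tau^*)$. A correspondingly careful charging argument (rerouting merged escape paths through the cycle of the basin in which they merge, where the continuation is free) is still needed to close the $\geq$ direction; as written, that step does not go through.
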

\begin{proof}
	For $x \in C'\in \mathcal C$, it is easy to show by contraddiction that the minimum in \eqref{minfw} is obtained for $C=C'$. Moreover, by definition $W_{C'}(x)=0$. Thus, up to a constant, the right hand side of \eqref{minfw} coincides with $\mathscr{FW}$ on all the vertices belonging to the cycles in $\mathcal C$. Since by Theorem \ref{teo:char_alpha} the values $\lambda$ completely identify the solution, then the right hand side of \eqref{minfw} coincides with $\mathscr{FW}$.  
\end{proof}
The above result implies that the minimal arborescence $\tau\in \mathcal T_x$ is obtained by joining minimal arborescences among the cycles in $\mathcal C$. It incurs zero cost for paths from vertices to the cycle of the basin of attraction and requires a single geodetic path $\gamma_{C',x}$ for some $C'\in \mathcal C$.

\subsection{Faces of the polyhedron $\mathrm{Lip}_1$}
We describe some features of the polyhedron $\mathrm{Lip}_1$ and identify the solutions
of \eqref{1HJ-noabs} with a subset of the relative boundary of such polyhedron.

\smallskip

The polyhedron $\mathrm{Lip}_1$ is identified by the inequalities \eqref{lipW}, but as shown in Lemma \ref{lometto} we can restrict to the inequalities $W(x)-W(y)\leq \Delta (y,x)$, for $(y,x)\in E$. Furthermore, the inequalities associated to edges such that $\Delta(y,x)>d(y,x)$ are redundant. The relative boundary of $\mathrm{Lip}_1$ is the region where at least one of the effective inequalities is indeed an equality. The relative boundary is the union of faces of different dimensions. 

\smallskip 

A face is thus identified by the edges of $E$ where equality holds, meaning that each face corresponds to a subgraph $\mathfrak f\subseteq \mathcal G$ that shares the same vertices as $\mathcal G$ and has special features. Notably, equality must always be satisfied on the edges of the cycles in $\mathcal C$, ensuring that for each $i$, we have $C_i\subseteq \mathfrak f$. Providing a complete geometric characterization of faces can be challenging. For instance, any directed path within $\mathfrak f$ must be geodetic.  Instead, we focus on identifying specific notable faces.  Consider a function $g$ belonging to a face on the relative boundary of $\mathrm{Lip}_1$, associated with the subgraph $\mathfrak f$. If we ignore the orientation of the edges in $\mathfrak f$, then,  since equality holds along there edges, the values of $g$ on vertices within the same connected component of the unoriented subgraph are fully determined by the value of one single vertex. Consequently, the dimension of the face corresponds to the number of components in the induced subgraph. 
This dimension can be decresead by one considering the invariance of the polyhedron by a global addition.

Consider $\mathfrak f=(\mathfrak u_i)_{i\in \mathcal C}$, the collection of in-unicyclic components that are
the basin of attaction of the cycles in $(V,E_0)$. Note that any path in $\mathfrak f$ is geodetic and moreover, for each edge $e\in \mathfrak f$, $\Delta(e)=0$. The subgraph $\mathfrak f$ has $|\mathcal C|$ components and corresponds to a special $|\mathcal C|$-dimensional face of $\mathrm{Lip}_1$ that contains all the functions $g$ that are constant on each basin of attraction $\mathfrak u_i$. The values $\lambda_i$ that the function $g$ assumes on each basin must satisfy \eqref{ineq:lambda}. We call such face the \emph{minimal $|\mathcal C|$ face}.

Consider now $\mathfrak f=(\mathfrak o_i)_{i\in \mathcal C}$ a spanning collection of out-unyciclic geodetic components. Any subgraph $\mathfrak f$ of this type has $|\mathcal C|$ components and corresponds to a $|\mathcal C|$-dimensional face of $\mathrm{Lip}_1$. A function $g$ that belongs to one of such faces assumes the values $\lambda_i$ on the cycles in $\mathcal C$ and the values on all the remaining
vertices in $\mathfrak o_i$ are completely determined by $(\lambda_i)_{i\in\mathcal C}$. The values $(\lambda_i)_{i\in\mathcal C}$ have to satisfy additional constraints  beyond those in  \eqref{ineq:lambda}, depending on the geometry of $\mathfrak f$. In some cases, there may even be  no valid values for $(\lambda_i)_{i\in\mathcal C}$. The new constraints stem from the fact that, given $(\lambda_i)_{i\in\mathcal C}$, the function is naturally Lipschitz within each component, but it is not guaranteed to be globally Lipschitz. We refer to faces of this type as \emph{maximal $|\mathcal C|$ faces}.

\begin{theorem}
	The set $\mathcal W$ of the solutions of the HJ equation \eqref{1HJ-noabs} coincides with the union of the maximal $|\mathcal C|$-dimensional faces of the polyhedron $\mathrm{Lip}_1$. Moreover for any $\lambda=(\lambda_i)_{i\in\mathcal C}\in \mathrm{Lip}_1^{\mathcal C}$ there exist, among the functions in $\mathrm{Lip}_1$  assuming the values $\lambda_i$ for all $C_i\in \mathcal C$, a minimal function $g_m$  belonging to the minimal $|\mathcal C|$-dimensional face and and a maximal one $g_M$ belonging to a 	maximal $|\mathcal C|$-dimensional face and indeed coincides with $W_\lambda$.
\end{theorem}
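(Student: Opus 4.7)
I would split the statement into the characterization of $\mathcal W$ and the separate construction of the pointwise minimum and maximum Lipschitz extensions of $\lambda$.

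For the characterization of $\mathcal W$ as a union of maximal $|\mathcal C|$-faces, I would proceed in two directions. Starting from $W\in\mathcal W$, Proposition~\ref{prop:super} combined with the preceding lemma on geodeticity of the associated out-unicyclic components produces a spanning family $(\mathfrak o_i)_{i\in\mathcal C}$ of geodetic out-unicyclic components such that $W$ saturates the Lipschitz inequality in \eqref{lipW} on every edge of the family; since this subgraph has $|\mathcal C|$ connected components, $W$ lies on a $|\mathcal C|$-dimensional face, which is a maximal $|\mathcal C|$-face by the definition given just before the statement. Conversely, any $g$ on a maximal $|\mathcal C|$-face can be written as in \eqref{supersol} for the spanning family of geodetic out-unicyclic components that defines the face, hence is a supersolution; since it lies in $\mathrm{Lip}_1=\underline{\mathcal W}$ by Lemma~\ref{lometto}, it is also a subsolution, hence a solution.

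Next, fix $\lambda\in\mathrm{Lip}_1^{\mathcal C}$ and set $\mathcal F_\lambda:=\{g\in\mathrm{Lip}_1:g\equiv\lambda_i\text{ on }C_i,\ \forall i\in\mathcal C\}$. Applying \eqref{lipW} with one endpoint on each $C_i$ gives the two-sided pointwise bounds
\[
\lambda_i-d(x,C_i)\le g(x)\le \lambda_i+d(C_i,x),\qquad \forall g\in\mathcal F_\lambda,\ \forall i\in\mathcal C,\ \forall x\in V.
\]
I would then introduce
\[
g_M(x):=\min_{i\in\mathcal C}\{\lambda_i+d(C_i,x)\}=W_\lambda(x),\qquad g_m(x):=\max_{i\in\mathcal C}\{\lambda_i-d(x,C_i)\},
\]
and check, using the triangle inequality for $d$ and the hypothesis $\lambda\in\mathrm{Lip}_1^{\mathcal C}$, that both functions are Lipschitz and assume the prescribed value $\lambda_i$ on each $C_i$; the bounds above then make $g_M$ and $g_m$ the pointwise maximum and minimum of $\mathcal F_\lambda$, respectively. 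By Theorem~\ref{teo:char_alpha}, $g_M=W_\lambda$ is a solution of \eqref{1HJ-noabs}, so by the first part of this theorem it automatically lies in a maximal $|\mathcal C|$-face.

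The remaining step is to show that $g_m$ lies in the minimal $|\mathcal C|$-face, i.e.\ that $g_m$ is constant on each basin $\mathfrak u_i$. For $x\in\mathfrak u_i$ the equality $d(x,C_i)=0$ makes the $i$-th argument of the defining maximum equal to $\lambda_i$, and one has to verify that no other index $j\ne i$ yields a larger value, i.e.\ $\lambda_j-d(x,C_j)\le\lambda_i$. This inequality is where I expect the main obstacle: the naive chain $d(x,C_j)\le d(x,C_i)+d(C_i,C_j)=d(C_i,C_j)$ goes the wrong way to combine with \eqref{ineq:lambda}, so the argument has to exploit the asymmetric, directional structure of the pseudo-quasimetric $d$ together with the in-unicyclic structure of the map $(V,E_0)$. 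The other tasks---placing $g_M$ in a maximal face and verifying the Lipschitz and boundary conditions for $g_M,g_m$---are routine once Proposition~\ref{prop:super} and Theorem~\ref{teo:char_alpha} are in hand.
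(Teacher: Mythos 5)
Your first part and your treatment of $g_M$ are correct and essentially the paper's own argument: the skeleton of a solution produces the spanning geodetic out-unicyclic family defining a maximal $|\mathcal C|$-face and conversely, and the bound $W(x)\leq \lambda_i+d(C_i,x)$ for every $W\in\mathrm{Lip}_1$ with the prescribed cycle values exhibits $W_\lambda$ as the pointwise maximum. For $g_m$ you take a different (and cleaner) route than the paper: you construct the pointwise minimum explicitly as $g_m(x)=\max_i\{\lambda_i-d(x,C_i)\}$, whereas the paper simply declares $g_m$ to be the function constantly equal to $\lambda_i$ on each basin $\mathfrak u_i$ and proves only the one-sided bound $g\geq \lambda_i$ on $\mathfrak u_i$ for every competitor $g$.

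The obstacle you flag at the end is genuine, and it is precisely the step the paper omits: the paper never checks that its constant-on-basins candidate is $1$-Lipschitz, which is equivalent to the inequality $\lambda_j-\lambda_i\leq d(x,C_j)$ for $x\in\mathfrak u_i$ that you could not derive from \eqref{ineq:lambda}. This inequality can in fact fail. Take $V=\{1,2,3,4,5\}$ with zero-cost two-cycles $C_1=\{1,2\}$ and $C_2=\{3,4\}$, the zero edge $(5,3)$ (so $5\in\mathfrak u_2$), and positive edges $(5,1)$, $(4,5)$, $(2,5)$ with weights $\epsilon$, $m$, $M$. Then $d(C_2,C_1)=m+\epsilon$ while $d(5,C_1)=\epsilon$; choosing $\lambda_1-\lambda_2=m+\epsilon$, which is admissible by \eqref{ineq:lambda}, gives $g_m(5)=\lambda_1-\epsilon=\lambda_2+m>\lambda_2$, so the pointwise minimum is not constant on $\mathfrak u_2$, and the constant-on-basins function violates \eqref{lipW} on the edge $(5,1)$. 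So your suspicion is vindicated: the final clause of the statement (``$g_m$ belongs to the minimal $|\mathcal C|$ face'') is not provable from \eqref{ineq:lambda} alone and needs an extra hypothesis such as $d(C_i,C_j)=d(\mathfrak u_i,C_j)$ for all $i\neq j$; everything else in your proposal stands, and your explicit formula for $g_m$ is the correct description of the minimal element of $\mathcal F_\lambda$.
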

\begin{proof}
	
	Given a solution $W\in\mathcal W$, the corresponding skeleton identifies the spanning outgoing geodetic family $(\mathfrak o_i)_{i\in \mathcal C}$ of a $\mathfrak f$ that corresponds to a maximal $|\mathcal C|$-dimensional face of $\mathrm{Lip}_1$ to which $W$ belongs. Conversely, if $W$ belongs to a maximal $|\mathcal C|$-dimensional face determined by a map $\mathfrak f$, then $W$ is a subsolution because it is one-Lipschitz. Moreover, it is also a supersolution with the skeleton corresponding to the edges of $\mathfrak f$.

	For the second part of the statement fix $\lambda\in \mathrm{Lip}_1^{\mathcal C}$. For any function $W\in \mathrm{Lip}_1$, for any $x\in V$ and for  any $i\in \mathcal C$, we have $W(x)-\lambda_i\leq d(C_i,x)$. This implies 
	\[
	W(x)\leq \inf_i \left\{\lambda_i+d(C_i,x)\right\}=W_\lambda(x)\,.
	\]
	
	The unique function $g_m$ that belongs to the minimal $|\mathcal C|$-dimensional face and assumes the values $\lambda$ on the cycles is the function that is constantly equal to $\lambda_i$ on each vertex of $\mathfrak u_i$ for every $i\in \mathcal C$. Since the one-Lipschitz condition requires that any function in $\mathrm{Lip}_1$ assuming the values $\lambda\in \mathrm{Lip}_1^{\mathcal C}$ on the cycles must take a value at least $\lambda_i$ on  $\mathfrak u_i$ for each $i$, the proof is complete.
\end{proof}
An interesting question is to find a useful geometric carachterization of $\mathscr{FW}$ as an element of the relative boundary of $\mathrm{Lip}_1$.

\section{Discrete weak KAM theory}\label{sec:DweakKAM}
In this section, we briefly outline how the above results parallel the general structure of the so-called weak KAM theory (see, for example, \cite{E,F,T}, to which we refer for comparison) in a purely discrete setting.

The key step in establishing this correspondence is to prove a sample path large deviations principle. The corresponding rate functional plays the role of the action. Although it is not natural to identify an Hamiltonian, we can define a discrete-time Lax-Oleinik semigroup and develop the parallel analysis. It turns out that it is easier and more natural to discuss this relation in terms of discrete time Markov chains. As explained in Remark \ref{discreteMC} the associated discrete HJ equation is the same.

\smallskip

We consider a family of discrete time Markov chain $\left(X^N_i\right)_{i\in \mathbb N}$, depending on the parameter $N$, assuming values on $V$ and such that
$$
\lim_{N\to+\infty}-\frac{1}{N}\log P_N(x,y)=\Delta(x,y)\geq 0\,,
$$
where $P_N$ are the transition probabilities. This is exactly the framework of \cite{OS1, OS2, S1, S2, S3}. Consider a finite trajectory $\left(X^N_1, \dots X^N_k\right)$; by the assumption \eqref{assumption} we have that the limiting behavior described by the variables $\left(X^\infty_1, \dots X^\infty_k\right)$ is purely deterministic. When $X^\infty_i=x$ then $X^\infty_{i+1}=y$ where $y$ is the unique element of $V$ such that $\Delta(x,y)=0$. The Markov chains $X^N$ are therefore random perturbations of the deterministic dynamical system $X^\infty$. The behavior of $X^\infty$ is such that, after a finite transient period, the system reaches one of the cycles in $\mathcal C$ and moves then periodically around it. A sample path large deviations is easy to compute and it is one of the main instruments used in \cite{OS1, OS2, S1, S2, S3}. Consider a fixed sequence $x:=\left(x_1,\dots ,x_k\right)\in V^k$, we have then
\begin{align*}
	\lim_{N\to +\infty}-\frac{1}{N}\log \mathbb P\Big((X_1^N, \dots ,X^N_k)=(x_1,\dots ,x_k)\Big) 
	&=\lim_{N\to +\infty}-\frac 1N \sum_{i=1}^k \log P_N(x_i,x_{i+1})\\
	&=\sum_{i=1}^k \Delta(x_i, x_{i+1})\,.
\end{align*}
We therefore conclude that the exponential cost of observing a finite trajectory $x$ is given by an action functional $\mathcal A$ on the set of finite trajectories, defined by
\begin{equation}\label{lagrangiana}
	\mathcal A(x):=\sum_{i=1}^k \Delta(x_i,x_{i+1})\,.
\end{equation}

The functional \eqref{lagrangiana} acts as an action, assigning a cost to every finite trajectory of the system and can be used to naturally define  a corresponding discrete-time Lax-Oleinik semigroup.\\

Given  a function $\mathcal V_0:V\to \mathbb R$ we define the iterated function $\mathcal V_n$ at time $n$ as
\begin{equation}\label{L-O}
	\mathcal V_n(x_n):= \inf_{x_0,\dots ,x_{n-1}} \left\{\mathcal V_0(x_0)+\sum_{i=0}^{n-1}\Delta(x_i,x_{i+1})\right\}=\inf_{y} \left\{\mathcal V_{n-1}(y)+\Delta(y,x_n)\right\}\,.
\end{equation} 
This defines our discrete-time Lax-Oleinik semigroup. The fixed points of \eqref{L-O} are the solutions of the discrete HJ equation \eqref{1HJ-noabs}. This framework is natural for our discrete setting, as there is no straightforward definition of the Hamiltonian corresponding to the action \eqref{lagrangiana}.

Based on the definition \eqref{L-O}, we note that our discrete contructions parallel the weak KAM theory, including discrete versions of all the related machinery and geometric constructions. We do not provide a detailed comparison here.

\section{Examples and remarks}

\subsection{The reversible case}

As an example, we consider the reversible case: in this situation, if $(x,y)\in E$ then also $(y,x)\in E$. The detailed balance characterizing the invariant measure $\pi_N$ in the reversible case is given by 
\begin{equation}\label{detailed}
	\pi_N(x) r_N(x,y)= \pi_N(y)r_N(y,x)\,, \qquad  \forall(x,y)\in E\,.
\end{equation}
The counterpart of Lemma \ref{lemma:HJe} for  equation \eqref{detailed} provides the following conditions:
\begin{equation}\label{eq:HJ-rev}
	W(x)+ \Delta(x,y)=W(y)+\Delta(y,x), \quad  \forall(x,y)\in E.
\end{equation}
Equations \eqref{eq:HJ-rev} have a unique solution (up to an arbitrary additive constant) if the condition, called \emph{gradient condition}, is satisfied: $\Delta(x,y)-\Delta(y,x)=:\delta(x,y)$ is a discrete vector field of gradient type, namely antisymmetric $\delta(x,y)=-\delta(y,x)$. If the rates are reversible (i.e. \eqref{detailed} is satisfied) the gradient condition is satisfied and there is a unique solution to \eqref{eq:HJ-rev}; if the gradient condition is not satisfied there are clearly no solutions to \eqref{eq:HJ-rev}. 

The gradient condition requires that, for any cycle $C=(x_0,\dots, x_n)$, the following  holds: $\sum_{(x,y)\in C}\delta(x,y)=0$. In this case the unique, up to an arbitrary constant, solution to \eqref{eq:HJ-rev} is 
\begin{equation}\label{defx*}
	\mathscr{FW}(x):=\sum_{e\in \gamma_{x^*,x}}\delta(e)\,,\quad \forall x\in V,
\end{equation}
where $x^*$ is an arbitrary fixed node and $\gamma_{x^*,x}$ is also an arbitrary path. The independence of the definition \eqref{defx*} from choice of the initial point and of the arbitrary path follows from classical homological arguments, since $\delta$ is antisymmetric and the sum of the weights along any cycle is zero. Indeed, considering two different paths and reversing the direction of one of them yields a cycle, from which the invariance follows. Changing the initial point merely shifts the values by a global additive constant. We used the notation $\mathscr{FW}$ since \eqref{defx*} coincides, up to a constant, with the FW-quasipotential.
Moreover, $\mathscr{FW}(y)-\mathscr{FW}(x)=\sum_{e\in \gamma_{x,y}}\delta(e)$.

In the reversible case the Assumption \ref{assumption} imposes a strict condition on the graph $(V,E_0)$. 

\begin{lemma}
	Given a sequence of reversible Markov chains, Assumption \ref{assumption} imposes that the map  $(V,E_0)$ contains only cycles of length 2, i.e. $|C|=2$ for any $C\in \mathcal C$.
\end{lemma}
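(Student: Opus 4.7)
The plan is to argue by contradiction: assume there is a cycle $C=(x_0,x_1,\dots,x_n)$ in $(V,E_0)$ with $n\geq 3$ and derive a violation of Assumption \ref{assumption}. I would proceed in three short steps.

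First, since $C\subseteq E_0$, we have $\Delta(x_i,x_{i+1})=0$ for every $i=0,\dots,n-1$. By reversibility of the chain, each reverse edge $(x_{i+1},x_i)$ belongs to $E$ as well, so $\delta(x_i,x_{i+1})$ and $\delta(x_{i+1},x_i)$ are all defined. Next, I would invoke the gradient condition on the cycle $C$, which as the excerpt notes is necessarily satisfied in the reversible setting. Summing along $C$ and using that the forward $\Delta$'s vanish,
\begin{equation*}
0=\sum_{i=0}^{n-1}\delta(x_i,x_{i+1})=\sum_{i=0}^{n-1}\bigl(\Delta(x_i,x_{i+1})-\Delta(x_{i+1},x_i)\bigr)=-\sum_{i=0}^{n-1}\Delta(x_{i+1},x_i).
\end{equation*}
Since every $\Delta\geq 0$ by Assumption \ref{assumption-1}, each term vanishes, hence $\Delta(x_{i+1},x_i)=0$ for all $i$, and so every reverse edge of $C$ lies in $E_0$.

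Finally, I would observe that at each vertex $x_{i+1}$ of $C$, the two edges $(x_{i+1},x_{i+2})$ and $(x_{i+1},x_i)$ are both outgoing and both in $E_0$. For $n\geq 3$, $x_{i+2}\neq x_i$, so these are two distinct outgoing edges in $E_0$ at $x_{i+1}$. This directly contradicts Assumption \ref{assumption}, which prescribes exactly one such edge. Therefore every cycle of $(V,E_0)$ has length $2$, as claimed; the case $n=2$ causes no issue because then $(x_{i+1},x_{i+2})=(x_{i+1},x_i)$ is a single edge.

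The argument is genuinely short; the only mildly delicate point is the step connecting vanishing of a sum of nonnegative terms to vanishing of each, which I expect to be the main thing to state cleanly, together with the edge-distinctness observation that rules out $n\geq 3$ while leaving $n=2$ consistent.
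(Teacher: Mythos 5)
Your argument is correct and coincides with the paper's own proof: both use the gradient condition along $C$ together with $\Delta\equiv 0$ on $C$ to force the reversed edges to have vanishing $\Delta$, and then conclude that each vertex of $C$ would have two distinct outgoing edges in $E_0$ unless $|C|=2$. Your write-up is slightly more explicit about the nonnegativity step and the $n=2$ degeneracy, but the route is the same.
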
 
\begin{proof}
	Consider $C\in \mathcal C$. Recalling the gradient condition and the fact that $\Delta(e)=0$ for $e\in C$, we have
	\[
	0=\sum_{e\in C}\delta(e)=-\sum_{e\in C}\Delta(-e)\,,
	\]
	where $-e$ is the edge obtained from $e$ by switching its orientation. The above equalities can be true if only if $\Delta(e)=\Delta(-e)=0$, for any $e\in C$, but this implies that, for each node in $C$, there are at least two edges in $E_0$ unless $|C|=2$.
\end{proof}

\begin{figure}
	\centering
	\mbox{ \xygraph{
			!{<0cm,0cm>;<1cm,0cm>:<0cm,1cm>::}
			!{(0,0) }*+{\bullet_{a}}="a"
			!{(2,1) }*+{\bullet_{b}}="b"
			!{(4,0) }*+{\bullet_{c}}="c"
			!{(2,-1.3) }*+{\bullet_{x}}="d"
			!{(1,2.4) }*+{\bullet_{y}}="e"
			!{(3,2) }*+{\bullet_{f}}="f"
			!{(4.2,1.6) }*+{\bullet_{g}}="g"
			!{(6,0) }*+{\bullet_{a}}="x"
			!{(8,1) }*+{\bullet_{b}}="y"
			!{(10,0) }*+{\bullet_{c}}="z"
			!{(8,-1.3) }*+{\bullet_{x}}="i"
			!{(7,2.4) }*+{\bullet_{y}}="j"
			!{(9,2) }*+{\bullet_{f}}="k"
			!{(10.2,1.6) }*+{\bullet_{g}}="h"
			"e":@[red]@/^0.4cm/"b"
			"b":@[red]@/^0.4cm/"d"
			"a":@/^0.4cm/"d"
			"f":@/^0.4cm/"b"
			"g":@/_0.3cm/"c"
			"c":@/^0.4cm/"d"
			"y":@[red]@/_0.4cm/"j"
			"i":@[red]@/_0.4cm/"y"
			"x":@/^0.4cm/"i"
			"z":@/^0.4cm/"i"
			"k":@/^0.4cm/"y"
			"h":@/_0.3cm/"z"
		}
	}
	\caption{The switch of orientation from an arborescence oriented toward $x$ to an arborescence oriented toward $y$.}\label{fig:revarb}
\end{figure}

\begin{remark}\label{remarkbij}
	In the reversible case, there is a bijection  between  $\mathcal T_x$ and $\mathcal T_y$ for any $x,y\in V$. The map from elements in $\mathcal T_x$ to $\mathcal T_y$ involves inverting the orientation of the edges in the unique directed path from $y$ to $x$ in $\mathcal T_x$. Similarly, the inverse map involves inverting the orientation of the edges in the unique path from $y$ to $x$ in $\mathcal T_y$ (see Figure \ref{fig:revarb}).
\end{remark}

\begin{proposition}
	A minimal weight arborescences ${\mathcal \tau}^*_x\in {\mathcal T}_x$ can be obtained by another minimal weight arborescences ${\mathcal \tau}^*_y\in {\mathcal T}_y$ inverting the orientation of the edges in the unique directed path from $x$ to $y$ in $\mathcal T_y$. In particular all the minimal weight arborescences $({\mathcal \tau}^*_z)_{z\in V}$ have the same unoriented support, differ just by the orientation and $\Delta\left({\mathcal \tau}^*_y\right)-\Delta\left({\mathcal \tau}^*_x\right)=\sum_{e\in \gamma_{x,y}}\delta(e)$.
\end{proposition}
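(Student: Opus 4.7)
The plan is to exploit the bijection $\phi:\mathcal T_x\to\mathcal T_y$ from Remark \ref{remarkbij} and show that it shifts the weight $\Delta(\tau)$ by a constant that depends only on the endpoints $x,y$. Once this is established, $\phi$ automatically sends minimizers to minimizers, and the three claims of the proposition follow at once.

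First I would fix $\tau\in\mathcal T_x$ and denote by $\gamma^\tau_{y,x}=(y=x_0,x_1,\dots,x_k=x)$ the unique directed path from $y$ to $x$ inside $\tau$. By the definition of $\phi$, we have that $\phi(\tau)$ is obtained from $\tau$ by replacing each edge $e_i=(x_{i-1},x_i)$ with its reverse $-e_i=(x_i,x_{i-1})$, leaving the remaining edges of $\tau$ unchanged. Hence
\begin{equation}
\Delta(\phi(\tau))-\Delta(\tau)=\sum_{i=1}^k\bigl[\Delta(-e_i)-\Delta(e_i)\bigr]=-\sum_{e\in\gamma^\tau_{y,x}}\delta(e).
\end{equation}
Now I would invoke the gradient condition for reversibility: $\delta$ is a discrete gradient, so $\sum_{e\in\gamma}\delta(e)$ depends only on the endpoints of $\gamma$. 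Using the representation \eqref{defx*} of $\mathscr{FW}$, this sum equals $\mathscr{FW}(x)-\mathscr{FW}(y)$, so
\begin{equation}
\Delta(\phi(\tau))-\Delta(\tau)=\mathscr{FW}(y)-\mathscr{FW}(x),
\end{equation}
a quantity independent of the choice of $\tau$.

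With this identity in hand, the bijection $\phi$ differs from a weight-preserving map by a global additive constant. In particular, $\tau$ minimizes $\Delta$ on $\mathcal T_x$ if and only if $\phi(\tau)$ minimizes $\Delta$ on $\mathcal T_y$, so choosing $\tau^*_y:=\phi(\tau^*_x)$ realizes a minimal arborescence directed toward $y$, and $\Delta(\tau^*_y)-\Delta(\tau^*_x)=\mathscr{FW}(y)-\mathscr{FW}(x)=\sum_{e\in\gamma_{x,y}}\delta(e)$ by path-independence. Since $\phi$ only reverses edges, $\tau^*_x$ and $\tau^*_y$ share the same unoriented support; iterating over pairs $(x,z)$ for all $z\in V$ yields the stated common unoriented support for the whole family $(\tau^*_z)_{z\in V}$.

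The only subtlety to address is non-uniqueness of the minimizers: in case several arborescences in $\mathcal T_x$ achieve the minimum, the argument shows that each of them is mapped by $\phi$ to a minimizer in $\mathcal T_y$, so the statement holds for a suitable choice among minimizers, which is the sense in which the proposition is formulated. I expect this combinatorial/bijection bookkeeping, rather than any analytic difficulty, to be the only potentially delicate point.
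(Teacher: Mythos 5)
Your proposal is correct and follows essentially the same route as the paper: both establish that the bijection of Remark \ref{remarkbij} shifts $\Delta$ by the constant $\sum_{e\in\gamma_{x,y}}\delta(e)=\mathscr{FW}(y)-\mathscr{FW}(x)$ (you spell out the edge-reversal computation and path-independence that the paper asserts ``by definition''), and then conclude that minimizers map to minimizers with the same unoriented support. Your closing remark on non-uniqueness of minimizers is a reasonable clarification of a point the paper glosses over, but it does not change the argument.
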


\begin{proof}
	Consider the bijection introduced in Remark \ref{remarkbij}. Given ${\mathcal \tau}_x\in{\mathcal T}_x$ and ${\mathcal \tau}_y\in{\mathcal T}_y$ two elements linked by the bijection, by definition, we have 
	\begin{equation}\label{unarel}
		\Delta\left({\mathcal \tau}_y\right)-\Delta\left({\mathcal \tau}_x\right)=\sum_{e\in \gamma_{x,y}}\delta(e)=\mathscr{FW}(y)-\mathscr{FW}(x).
	\end{equation}
	This implies that the weights of the arborescences in $\mathcal T_y$ are the same as those in $\mathcal T_x$, up to a global shift, and the minimizers are therefore mapped bijectively. Consequently, every pair of minimizers in $(\mathcal T_z)_{z\in V}$ is related simply by the inversion of some arrows. The last statement follows from the fact that \eqref{unarel} holds for any pair of arborescences related by the bijection.
\end{proof}

In the reversible case, we can characterize the maximal $|\mathcal C|$-dimensional face to which the FW-quasipotential belongs.  Recall that a maximal $|\mathcal C|$-dimensional face is indentified by a spanning collection of geodetic out-unicyclic components. 
\begin{proposition}
	In the reversible case the solution $\mathscr{FW}$ belongs to the maximal $|\mathcal C|$-dimensional face of $\mathrm{Lip}_1$ associated to the spanning out-unicyclic geodetic components obtained reversing all the edges of the map $(V,E_0)$.
\end{proposition}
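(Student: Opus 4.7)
The plan is to let $(\mathfrak o_i)_{i\in\mathcal C}$ denote the family obtained by reversing all edges of the map $(V,E_0)$ and to verify three points in order: (i) $(\mathfrak o_i)_{i\in\mathcal C}$ is a spanning disjoint collection of out-unicyclic components whose cycles are exactly the $C_i\in\mathcal C$; (ii) every edge of this family lies in the skeleton of $\mathscr{FW}$; (iii) each $\mathfrak o_i$ is geodetic. Together these three facts show that $\mathscr{FW}$ satisfies all the defining equalities of the maximal $|\mathcal C|$-dimensional face associated to $(\mathfrak o_i)_{i\in\mathcal C}$, which is exactly the claim.

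For (i), I would invoke the preceding lemma, which forces every cycle $C_i\in\mathcal C$ to have length exactly two in the reversible case. Reversing the two edges of such a cycle preserves it as an edge set; meanwhile, the unique outgoing edge at each non-cycle vertex of the in-unicyclic basin $\mathfrak u_i$ becomes the unique incoming edge, and every directed path ending in $C_i$ becomes a directed path starting in $C_i$. Hence each $\mathfrak u_i$ is converted into an out-unicyclic component $\mathfrak o_i$ with the same vertex set and the same cycle $C_i$, and the collection $(\mathfrak o_i)_{i\in\mathcal C}$ spans $V$.

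For (ii), I would specialize the reversible gradient representation $\mathscr{FW}(y)-\mathscr{FW}(x)=\sum_{e\in\gamma_{x,y}}\delta(e)$ to the single edge $(x,y)\in E_0$. Since $\Delta(x,y)=0$, this yields $\mathscr{FW}(y)-\mathscr{FW}(x)=\delta(x,y)=-\Delta(y,x)$, equivalent to $\mathscr{FW}(x)=\mathscr{FW}(y)+\Delta(y,x)$. This is exactly the tight supersolution condition for the reversed edge $(y,x)\in\mathfrak o_i$, so every edge of $(\mathfrak o_i)_{i\in\mathcal C}$ belongs to the skeleton $E_{\mathscr{FW}}$.

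For (iii), I would telescope along the unique path in $\mathfrak o_i$ from $C_i$ to any non-cycle vertex $v$: by (ii) every consecutive pair is tight, so the total $\Delta$-cost of this path equals $\mathscr{FW}(v)-\mathscr{FW}(C_i)$. Since $\mathscr{FW}\in\mathrm{Lip}_1$, one has $\mathscr{FW}(v)-\mathscr{FW}(C_i)\leq d(C_i,v)$, while the explicit path provides the reverse inequality. Equality forces the path to be a geodetic, so $(\mathfrak o_i)_{i\in\mathcal C}$ is indeed a spanning family of geodetic out-unicyclic components, and $\mathscr{FW}$ belongs to the associated maximal $|\mathcal C|$-dimensional face. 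The main obstacle, though mild, is step (iii): the reversible gradient formula of (ii) is essential, as it converts the combinatorial arborescence minimization defining $\mathscr{FW}$ into the telescoping identity that makes every edge of $(\mathfrak o_i)_{i\in\mathcal C}$ simultaneously tight.
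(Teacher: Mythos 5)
Your proof is correct and follows essentially the same route as the paper: the key step (ii) --- applying the reversible identity $\mathscr{FW}(x)+\Delta(x,y)=\mathscr{FW}(y)+\Delta(y,x)$ at the unique $E_0$-edge exiting $x$ to conclude that the reversed edge is tight, i.e.\ belongs to the skeleton --- is exactly the paper's argument. Your steps (i) and (iii) merely make explicit details (length-two cycles under reversibility, geodesy via telescoping the tight edges) that the paper delegates to its preceding lemmas.
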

\begin{proof}
	We need to show that the skeleton of $\mathscr{FW}$ is obtained by reversing the edges in $(V,E_0)$. Recall that $\mathscr{FW}$ solves \eqref{eq:HJ-rev}, then for any $(x,y)\in E$ we have
	\[
	\mathscr{FW}(x)+\Delta(x,y)=\mathscr{FW}(y)+\Delta(y,x)\,.
	\]
	Fix $x\in V$ and consider the infimum over all $y$ such that $(x,y)\in E$ of the left hand side of the above equation.  We have that the minimum is achieved for the unique $(x,y^*)\in E_0$. This leads to  the equality
	$\mathscr{FW}(x)=\mathscr{FW}(y^*)+\Delta(y^*,x)$ which is equivalent to say that $(y^*,x)$ belongs to the skeleton of $\mathscr{FW}$.
\end{proof}

\subsection{The ring}
Consider $\mathcal{G}$ as a finite ring with $k$ nodes, where $V=\mathbb Z/(k\mathbb Z)$ and $E=\{(x,x\pm 1), x\in V\}$. In this case, we have a simple and compact representation of the $\mathscr{FW}$ solution, which shares a  similar structure with the continuous one, see \cite{FG}. Let $\delta:E \to \mathbb R$ be the discrete vector field, i.e. such that $\delta(x,y)=-\delta(y,x)$, defined by $\delta(x,x+1):=\Delta(x,x+1)-\Delta(x+1,x)$, with the remaining values defined by antisymmetry. Define the function $S:\mathbb N \to \mathbb R$ by setting $S(x):=\sum_{y=1}^{x-1}\delta(y,y+1)$. In the previous formulas the function $\delta$ is computed on edges of $\mathcal G$ so that the sums in the arguments are modulo $k$ and $y+k=y$. If $S(k+1):=\sum_{y=1}^{k}\delta(y,y+1)=0$ we have that $S(y+k)=S(y)$ so that $S$ is a periodic function of period $k$, and this corresponds to the reversible case (since the gradient condition is satisfied).

\smallskip

We have
\begin{equation}\label{FW-1d}
	\mathscr{FW}(x)=\min_{x\leq y< x+k}\Big\{S(x)-S(y)-\Delta(y,y+1)\Big\}\,.
\end{equation}
The function $\mathscr{FW}$ introduced in \eqref{FW-1d} is originally defined on $\mathbb N$  but it can be readily shown to be periodic with period $k$, allowing for a natural interpretation as a function on $V$. To prove that \eqref{FW-1d} coincides with \eqref{FW solution} up to a constant, it suffices to add and subtract the constant $\sum_{z=1}^k\Delta(z,z+1)$  on the right hand side of \eqref{FW-1d},  getting that, the right hand side of \eqref{FW-1d} coincides with $\min_{\tau\in \mathcal T_x}\left\{\Delta(\tau)-\mathbb \sum_{z=1}^k\Delta(z,z+1)\right\}$.

In the reversible case, when $S$ is periodic, we may write
\[
\mathscr{FW}(x)=S(x)+\min_{x\leq y< x+k}\Big\{-S(y)-\Delta(y,y+1)\Big\}\,.
\] 
The second term on the right hand side does not depend on $x$ so that $\mathscr{FW}$ coincides with $S$ up to an additive constant.

\smallskip

The minimization formula \eqref{FW-1d} closely resembles its continuous counterpart for diffusions on a circle and admits a straightforward yet insightful geometric interpretation, as discussed in  \cite{FG}. While the continuous case is easier to describe, an analogous construction exists in the discrete setting. Specifically, for a continuous function $f$  defined on the unit circle $\mathbb R/\mathbb Z$, one can define the function $S:\mathbb R\to \mathbb R$ as $S(x):=\int_0^xf(y)dy$. Taking
\begin{equation}\label{suncontinuous}
	\mathscr{FW}(x):=\inf_{y\in [x,x+1]}\Big\{S(x)-S(y)\Big\}\,,
\end{equation}
Theorem 2.3 in \cite{FG} summarizes several key properties of the function $\mathscr{FW}$, including its one-periodicity, meaning it can be viewed as a function on the circle. Moreover, if  $\int_0^1f(y)dy=0$, it coincides with $S$ up to an additive constant. Additionally, it can be explicitly constructed using the so-called \textit{sunshine transformation} (see Figure \ref{sunshine}).
\begin{figure}
	\centering
	\includegraphics[width=10cm]{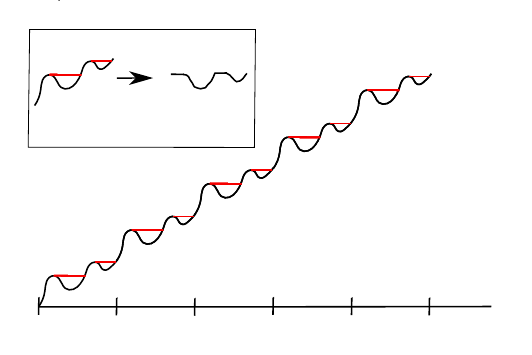}
	\caption{An example of the sunshine transformation for the case of a continuous function $S$ (in black) with $\int_0^1f(y)dy >0$. The light is assumed to arrive horizontally from the left: portions of the graph below the horizontal red line remain in shadow, while the rest are illuminated. The transformation preserves the shadowed regions while replacing the illuminated sections with horizontal segments, shifting them vertically to maintain continuity in the final transformed graph. A visual representation of this process is highlighted within the rectangle in the upper left. The resulting graph is one-periodic. In the case $\int_0^1f(y)dy< 0$, the same construction is applied, but with light arriving from the right. For $\int_0^1f(y)dy=0$, the graph is already periodic, every point lies in shadow, and the transformation leaves the graph unchanged. }\label{sunshine}
\end{figure}
Formula \eqref{suncontinuous} can be interpreted as a minimal continuous arborescences construction, as noted in Remark 1 of \cite{FG}. For definitions of continuous arborescences on metric graphs, as well as a continuous version of the Matrix Tree Theorem for diffusions on metric graphs, we refer to \cite{ACG}. In the case of a continuous ring, a continuos arborescence directed torward a point $x$ is obtained by cutting the ring on another point, say $y$, and orienting the two resulting segments so that they both point torward $x$. Formula \eqref{suncontinuous} coincides up to an additive constant with
\begin{equation}
	\mathscr{FW}(x)=\min_{y\in [x,x+1]}\Big\{\int_x^y[-f(z)]_+dz+\int_y^{x+1}[f(z)]_+dz\Big\}
\end{equation}  
where $[\cdot]_+$ is the positive part. This suggests that solutions of HJ equations associated to diffusions on metric graphs can be obtained by continuous minimal arborescences constructions.

\subsection{Vanishing viscosity solutions}

In the continuous setting, viscosity solutions of HJ equations can be obtained by a vanishing viscosity limit. In the case of a diffusion process on a ring, see for example \cite{FW,FG},  the stationary HJ equation associated to the rate functional of the invariant measure is 
\begin{equation}\label{HJcont}
	H(x,W_x)=W_x(W_x-f(x))=0\,.
\end{equation}
In general, the viscosity solution of \eqref{HJcont} is not unique and a natural selection principle for a solution is to consider the stochastic process behind and look at \eqref{HJcont} as a limit of the stationary Kolmogorov equation. This means to consider the small noise limit $\epsilon \to 0$ of the stationary condition
\begin{equation}
	\epsilon \left(e^{-\epsilon^{-1}W^\epsilon(x)}\right)_{xx}-\left(f(x)e^{-\epsilon^{-1}W^\epsilon(x)}\right)_x=0\,.
\end{equation}
The unique smooth solution $W^\epsilon$ can be found explicitely and converges, in the limit as $\epsilon$ goes to zero, to the solution \eqref{suncontinuous}, i.e. $W^\epsilon\to \mathscr{FW}$. The same solution is selected by the classic vanishing viscosity limit obtained by considering the equation
\begin{equation}
	H(x, W^\epsilon_x)=W^\epsilon_x\left(W^\epsilon_x-f\right)=\epsilon W^\epsilon_{xx}\,.
\end{equation}
Indeed, the unique smooth solution of the previous equation can be computed explicitely by the change of variables $\psi^\epsilon=1/W_x^\epsilon$ that linearizes the equation, and then it can be shown that again $W^\epsilon \to \mathscr{FW}$.

\smallskip

We observe that the solution $\mathscr{FW}$ to the discrete HJ equation \eqref{1HJ-noabs} can also be considered  a \textit{vanishing viscosity solution}. The stationary condition \eqref{rice} written substituting $e^{-NW^N}$ to $\pi_N$ can be written as
\begin{equation}\label{rice2}
	e^{-NW^N(x)}\sum_{y: (x,y)\in E} r_N(x,y)=\sum_{y:(y,x)\in E} e^{-NW^N(y)}r_N(y,x)\,, \qquad \forall x\in V.
\end{equation}
By Lemma \ref{FWsolemma} we have that $\lim_{N\to +\infty} W^N=\mathscr{FW}$.

\subsection{Two dimensional case}\label{exse}
In this subsection we represent graphically the normalized FW-quasipotential as a point in the polyhedron identified by $\mathcal{W}$. We consider the simplest possible case that is special for several reasons but allows a simple picture.\\

Take $V=\{1,2,3,4\}$ and let $E$ be the set of edges depicted in Figure  \ref{fig:2dim}. Suppose that $\Delta(1,2)=\Delta(2,1)=\Delta(3,4)=\Delta(4,3)=0$ and that, without loss of generality,  $0<\Delta(2,3)<\Delta(4,1)$. The graph $(V,E_0)$ contains two cycles, $C_1$ and $C_2$. 
\begin{figure}[h]
	\label{fig:2dim}
	\centering
	\mbox{ \xygraph{
			!{<0cm,0cm>;<1cm,0cm>:<0cm,1cm>::}
			!{(0,0) }*+{\bullet_{1}}="1"
			!{(1,0) }*+{C_1}="5"
			!{(2,0) }*+{\bullet_{2}}="2"
			!{(4,0) }*+{\bullet_{3}}="3"
			!{(5,0) }*+{C_2}="6"
			!{(6,0) }*+{\bullet_4}="4"
			"1":@[red]@/^0.4cm/"2"
			"2":@[red]@/^0.4cm/"1"
			"3":@[red]@/^0.4cm/"4"
			"4":@[red]@/^0.4cm/"3"
			"4":@/^1.5cm/"1"
			"2":@/^0.6cm/"3"
		}
	}\caption{The transition graph of the example in Section \ref{exse}. Transitions corresponding to $\Delta=0$ are drawn in red.}
	\end{figure}  By Theorem \ref{teo:char_alpha} there exists a pair $(\lambda_1,\lambda_2)\in \mathrm{Lip}_1^{\mathcal C}$ such that any solution to the discrete HJ equation \eqref{1HJ-noabs} can be written as
\begin{equation*}
	W_\lambda(x)=\min\{ W_{C_1}(x) + \lambda_1; W_{C_2}(x) + \lambda_2\},
\end{equation*}
where 
\begin{equation*}
	\begin{array}{r|cccc}
		& 1 & 2 & 3 & 4 \\
		\hline
		W_{C_1} &   0 &   0   &  \Delta(2,3) & \Delta(2,3) \\
		W_{C_2} & \Delta(4,1) & \Delta(4,1) &     0 &  0  
	\end{array}
\end{equation*}
The set $\mathrm{Lip}_1^{\mathcal C}$ that in this special case is in bijection with $\mathrm{Lip}_1$ is
identified by the inequalities $-\Delta(2,3)\leq \lambda_1-\lambda_2\leq \Delta(4,1)$.
The FW-quasipotential is given by
\begin{equation*}
	\mathscr{FW}(1)=\mathscr{FW}(2)= \Delta(4,1)-\Delta(2,3);\qquad \mathscr{FW}(3)=\mathscr{FW}(4)=0.
\end{equation*}
By a straightforward calculation, we obtain that taking $\lambda_1=\Delta(4,1)-\Delta(2,3)$ and $\lambda_2=0$, we have $W_\lambda=\mathscr{FW}$. In Figure \ref{fig:cilindro} we represent the normalized FW-solution as a point in the set of normalized solutions $\mathcal{W}_0$ that is a subset of $\mathcal W$ that in this special case concides with $\mathrm{Lip}_1^{\mathcal C}$ and $\mathrm{Lip}_1$, since they are $|\mathcal C|=2$ dimensional polyhedrons.  

\begin{figure}
\begin{tikzpicture}\label{fig:cilindro}
	\draw[-stealth] (-5,0) -- (5,0) node[right]{$\lambda_1$};
	\draw[-stealth] (0,-3) -- (0,5) node[left]{$\lambda_2$};
	\draw[line width=0.7mm] (0,0) -- (0,2) node[left]{$\Delta(4,1)$};
	\draw[line width=0.7mm] (0,0) -- (0.2,0) node[above right]{$\mathscr{FW}$}  -- (1.5,0) node[below right]{$\Delta(2,3)$};
	\draw (-4.5,-1) -- (3,4);
	\draw (-3,-3) -- (4.5,2);
	\filldraw[black] (0.8,0) circle (2pt);
	\filldraw[black] (1.5,0) circle (2pt);
	\filldraw[black] (0,2) circle (2pt);
	\fill[gray!30,nearly transparent] (-4.5,-1) -- (3,4) -- (4.5,2) -- (-3,-3) -- cycle;

\end{tikzpicture}
\caption{The viscosity solutions of the HJ equation for the example in section \ref{exse}.}
\end{figure}

The grey area in figure \ref{fig:cilindro} is the set $\mathrm{Lip}_1^{\mathcal C}$ and $\mathcal{W}_0$ is indicated by the thiker segments on the axis.

\section{Conclusions and perspectives}
We end the paper with a short section illustrating possible developments and open problems. We discuss only general ideas and proposals.

\smallskip

We have focused on Markov chains satisfying Assumption \ref{assumption}, mainly for reasons of simplicity, since in this case the structure of the graph $(V,E_0)$ is simple and admits a canonical form. Many of our arguments also apply in the more general setting in which Assumption \ref{assumption} does not hold; however, in this case the structure of the graph $(V,E_0)$ may be arbitrary, representing the discrete counterpart of equilibrium points in the continuous setting. In particular, in the general case equation \eqref{1HJ} must be used, and it is not clear whether a simple and complete characterization of the solutions can always be obtained. 

\smallskip

Another interesting issue is the study of the asymptotic behavior of the invariant measure of a multiscale Markov chain using the Markov chain tree theorem. In this paper, we considered the case of exponentially decaying rates and focused only on the exponential asymptotic behavior. A possible extension is to analyze the full asymptotic behavior and to understand how HJ equations are related to it. This problem is closely connected to the renormalization group contruction studied in \cite{OS1, OS2, S1, S2, S3}.

\smallskip

The natural extension and generalization of our result is to consider metric graphs, with diffusions along the metric edges and possibly sticky behavior at the nodes, as in \cite{ACG}. The natural scaling to be considered is the small noise one, which may also involve, in  different ways, the behavior at the nodes. The resulting analytical problems fall within the general theory of HJ equations on metric graphs \cite{CM}, for which more delicate and involved definitions of solutions are necessary. Several scaling limits related to problems of this type appear to be of independent interest.

\smallskip

An open problem is to study the discrete HJ equation of the lifted Markov chain described in Appendix \ref{app:MC} and its relation to the discrete HJ equation on the original graph. It is reasonable to ask whether a simple continuous analogue of such a lifting exists.

\smallskip

Finally it is very interesting to investigate the application of the discrete HJ equations to problems involving metastability, like in \cite{OS1, OS2, S1, S2, S3}, and the relation with problems and algorithms for optimization of paths on graph \cite{Sch}.

\appendix

\section{Lifting Markov chains}\label{app:MC}
A simple and elegant way of proving the Markov chain tree Theorem is provided in \cite{AT} by constructing a Markov chain on an enlarged state space, such that its projection is the original Markov chain on $(V,E)$ with rates $r_N$. In particular, we consider the directed graph $\hat{\mathcal{G}}=(\mathcal T, \hat E)$. The set of vertices coincides with the set of arborescences and $(\tau,\tau')\in \hat E$ if and only if $\tau\in \mathcal T_x$, $\tau'\in \mathcal T_y$, $(x,y)\in E$ and the arborescence $\tau'$ is obtained by $\tau'=\left[\tau\cup (x,y)\right]\setminus (y,z)$ where $(y,z)$ is the unique edge exiting form $y$ in $\tau$. The rates $\hat r_N$ of $(\tau,\tau')\in \hat E$ with $\tau\in \mathcal T_x$ and $\tau'\in \mathcal T_y$ are given by $\hat r_N(\tau,\tau'):=r_N(x,y)$. Call $\hat X_N$ the Markov chain associated to such rates. There is a natural projection map $\Pi:(\mathcal T, \hat E) \to (V,E)$ such that, for any $\tau\in \mathcal T_x$, $\Pi(\tau)=x$ and, for any $(\tau,\tau')\in \hat E$ with $\tau\in \mathcal T_x$ and $\tau'\in \mathcal T_y$, $\Pi(\tau,\tau')=(x,y)$.
\begin{theorem}\cite{AT}
	The Markov chain $\hat X_N$ is irreducible, and has an unique invariant measure given by
	\begin{equation}\label{hat-inv}
		\hat \pi_N(\tau)=\frac{1}{Z_N}\mathcal P_{r_N}(\tau)\,,
	\end{equation}
	where $Z_N$ is a normalization factor and the symbol $\mathcal P$ has been defined in \eqref{wfw}.
	The process $X_N:=\Pi\left(\hat X_N\right)$, is a Markov chain on $(V,E)$ with transition rates $r_N$ and we deduce \eqref{eq:FWformula} as a consequence.
\end{theorem}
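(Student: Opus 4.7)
The plan is to verify three assertions in order: irreducibility of $\hat X_N$, stationarity of $\hat\pi_N$, and the identification $X_N = \Pi(\hat X_N)$; the matrix-tree formula \eqref{eq:FWformula} then follows by pushing forward. For irreducibility I would combine the strong connectivity of $(V,E)$ with an iterative construction: first, following any directed path in $(V,E)$ from $x$ to a chosen target $y$ by successive moves sends any $\tau \in \mathcal T_x$ into some element of $\mathcal T_y$; second, any two arborescences in $\mathcal T_y$ can be linked by processing the non-root vertices of the target $\tau'$ in a topological order (deepest first), and for each $v$ performing a short excursion that temporarily shifts the root through $v$ and then through $p_{\tau'}(v)$ before returning to $y$ along edges already in place. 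The surgery is elementary but somewhat tedious.

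For stationarity, I would check the balance equation at each $\tau \in \mathcal T_y$. The outflow is $\hat\pi_N(\tau)\sum_{z:(y,z)\in E}r_N(y,z) = Z_N^{-1}\mathcal P_{r_N}(\tau)r_N(y)$. For the inflow, the predecessors of $\tau$ are indexed by pairs $(x,z)$ with $(x,y)\in E\cap \tau$, $(y,z)\in E$, and $z$ belonging to the subtree of $\tau$ rooted at $x$; the corresponding predecessor is $\tau'' = [\tau\setminus(x,y)]\cup(y,z) \in \mathcal T_x$, with rate $\hat r_N(\tau'',\tau) = r_N(x,y)$. Using the product form of $\hat\pi_N$, a direct calculation gives
\[
\hat\pi_N(\tau'')\,\hat r_N(\tau'',\tau) = Z_N^{-1}\mathcal P_{r_N}(\tau)\,r_N(y,z).
\]
The key combinatorial observation is that for each $z$ with $(y,z)\in E$ there is a unique admissible $x$, namely the vertex immediately preceding $y$ on the unique $\tau$-path from $z$ to $y$. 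Summing over $z$ then yields inflow $= Z_N^{-1}\mathcal P_{r_N}(\tau)r_N(y)$, matching the outflow.

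Finally, from each $\tau \in \mathcal T_x$ and each $(x,y)\in E$ there is exactly one successor in $\mathcal T_y$, with rate $r_N(x,y)$; by standard lumpability $X_N := \Pi(\hat X_N)$ is Markov on $(V,E)$ with the prescribed rates, and the pushforward $\Pi_*\hat\pi_N$ is invariant for $X_N$. By uniqueness of the invariant measure $\pi_N(x) = \sum_{\tau\in\mathcal T_x}\hat\pi_N(\tau)$, and the probability normalization $\sum_x \pi_N(x)=1$ fixes $Z_N = \sum_{\tau\in\mathcal T}\mathcal P_{r_N}(\tau)$, giving \eqref{eq:FWformula}. The main obstacle will be the bijective enumeration of predecessors in the stationarity step: one must verify both that the subtree condition exactly characterizes when $\tau''$ is an arborescence in $\mathcal T_x$, and that the parametrization by $(x,z)$ enumerates each predecessor exactly once.
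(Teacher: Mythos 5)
Your treatment of stationarity and of the projection is correct and is essentially the paper's own argument in different clothing. The paper phrases the balance at $\tau$ as the lifted graph $(\mathcal T,\hat E)$ being Eulerian: each outgoing edge of $\tau$, labelled by an edge of $\mathcal G$ exiting the root, is paired with the unique incoming edge carrying the same flow $\mathcal P_{r_N}\bigl(\tau\cup(x,y)\bigr)/Z_N$. Your enumeration of the predecessors of $\tau\in\mathcal T_y$ by pairs $(x,z)$ with $x$ a child of the root and $z$ in the subtree of $x$, together with the observation that each out-edge $(y,z)$ determines a unique admissible $x$, is exactly that bijection, and you supply the verification (that the subtree condition characterizes when $\tau''$ is again an arborescence) which the paper leaves implicit. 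The identification of $Z_N$ and the deduction of \eqref{eq:FWformula} via lumpability likewise match what the paper dismisses as a direct consequence of the definitions.

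The one place where you go beyond the paper is irreducibility, which the paper simply outsources to \cite{AT}, and there your sketch does not quite work as stated. Processing the non-root vertices of the target $\tau'$ deepest first and, for each $v$, making an excursion $y\to v\to p_{\tau'}(v)\to\cdots\to y$ does install the desired edge at $v$ (the current arborescence always records, at each vertex, the edge of the last exit of the root walk from that vertex). But the free forward leg of a \emph{later} excursion, from $y$ to the next vertex $v'$, may pass through an already-processed vertex $u$ and overwrite its last-exit edge with a wrong one; since $u$ is at least as deep as $v'$, it is not an ancestor of $v'$ in $\tau'$, so the return leg along $\tau'$ never repairs it. One needs a more careful choice of the connecting walk (or one simply cites \cite{AT}, as the paper does). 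This gap is confined to the irreducibility claim and does not affect the rest of your argument.
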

\begin{proof}
	We give a sketch of the proof.
	The irreducibility of the Markov chain $\hat X_N$ is discussed in \cite{AT}. The graph $(\mathcal T, \hat E)$ is Eulerian since, for each node $\tau$ there is a bijection among outgoing and ingoing edges. Indeed, for each $\tau\in \mathcal T_x$, the outgoing edges in $\hat{\mathcal G}$ are in correspondence with the outgoing edges from $x$ in $\mathcal G$. Let $\tau\in \mathcal T_x$ and $(\tau,\tau')\in \hat E$ with $\tau'\in \mathcal T_y$. We have that $\tau\cup (x,y)=\mathfrak u_x\in \mathcal U^{\mathrm{in}}_x$ so that there exists $\tau''\in \mathcal T_z$ such that $\tau''\cup (z,x)= \tau\cup (x,y)=\mathfrak u_x$. This correspondence $(\tau,\tau') \leftrightarrow (\tau'',\tau)$ gives the bijection among exiting and entering edges at $\tau$, i.e. a bijection between exiting and entering elements of $\hat E$ on each $\tau\in \mathcal T$ (see \cite{BC} section 2.6 for more details). Moreover by \eqref{hat-inv} we have, up to a common multiplicative constant, 
	\begin{equation}\label{staz-tree}
		\hat\pi_N(\tau)r_N(x,y)=\hat\pi_N(\tau'')r_N(z,x)= \mathcal P_{r_N}(\mathfrak u_x)\,.
	\end{equation}
	Using the fact that the graph $(\mathcal T, \hat E)$ is Eulerian, the stationarity of \eqref{hat-inv} follows. The last statements of the theorem are direct consequences of the definitions.
\end{proof}

{\bf Acknolegments:} 
M.A. acknowledges the Gruppo Nazionale per l’Analisi Matematica, la Probabilità e le loro Applicazioni (GNAMPA) of the Istituto Nazionale di Alta Matematica (INdAM). G.P. acknowledges the Gruppo Nazionale per la Fisica Matematica (GNFM) of the Istituto Nazionale di Alta Matematica (INdAM). D.G. acknowledges A. Faggionato, M. Mariani and Y. Shu for several useful discussions and the financial support from the Italian Research Funding Agency (MIUR) through PRIN project ``Emergence of condensation-like phenomena in interacting particle systems: kinetic and lattice models'', grant n. 202277WX43.






\end{document}